\documentclass[12pt]{amsart}

\textwidth=5.55in \oddsidemargin=.4in \evensidemargin=0.4in

\usepackage{amssymb, mathrsfs}
\usepackage{graphicx}

\usepackage{lineno}

\usepackage{color}

\newtheorem{theorem}{Theorem}

\newtheorem{observation}[theorem]{Observation}
\newtheorem{corollary}[theorem]{Corollary}

\newtheorem{problem}{Problem}

\newtheorem{them}{Theorem}
\newtheorem{lema}[them]{Lemma}

\newtheorem{example}[theorem]{Example}
\newtheorem{remark}[theorem]{Remark}

\begin{document}

\title[Path addition]{Changing and unchanging of the domination number of a graph: path addition numbers }

\author[]{Vladimir Samodivkin}
\address{Department of Mathematics, UACEG, Sofia, Bulgaria}
\email{vl.samodivkin@gmail.com}
\today
\keywords{domination number, path addition}

\begin{abstract} 
Given a graph $G = (V,E)$ and two its distinct vertices $u$ and $v$. 
The $(u,v)$-$P_k$-{\em addition graph} of $G$ is the graph 
$G_{u,v,k-2}$ obtained from disjoint union of $G$ and a path 
$P_k: x_0,x_1,..,x_{k-1}$, $k \geq 2$, by identifying 
the vertices $u$ and $x_0$, and identifying the vertices $v$ and $x_{k-1}$.  
We prove that 
(a) $ \gamma(G)-1 \leq \gamma(G_{u,v,k})$ for all $k \geq 1$, and 
(b)  $\gamma(G_{u,v,k})  > \gamma(G)$ when $k \geq 5$. 
We also provide  necessary and sufficient conditions for 
 the equality  $\gamma(G_{u,v,k})  =  \gamma(G)$ to be valid  for each  
pair $u,v \in V(G)$. 
\end{abstract}

\maketitle


\begin{flushleft}
\textit{Mathematics Subject Classification (2010)}. Primary 05C69.
\end{flushleft}


\section{Introduction}
For basic notation and graph theory terminology not explicitly defined here, we
in general follow Haynes et al. \cite{hhs1}.
We denote the vertex set and the edge set of a graph $G$ by $V(G)$ and $ E(G),$  respectively. 
The complement $\overline{G}$ of $G$ is the graph whose
vertex set is $V (G)$ and whose edges are the pairs of nonadjacent vertices of $G$. 
We write (a) $K_n$ for the {\em complete graph} of order $n$, 
(b) $K_{m,n}$ for the {\em complete bipartite graph} with partite sets of order $m$ and $n$, and 
(c) $P_n$ for the  {\em path} on $n$ vertrices. 
Let $C_m$ denote the {\em cycle} of length $m$. 
	For any vertex $x$ of a graph $G$,  $N_G(x)$ denotes the set of all  neighbors of $x$ in $G$,  
	$N_G[x] = N_G(x) \cup \{x\}$ and the degree of $x$ is $deg(x,G) = |N_G(x)|$. 
The {\em minimum} and {\em maximum} degrees
 of a graph $G$ are denoted by $\delta(G)$ and $\Delta(G)$, respectively.
For a subset $A \subseteq V (G)$, let $N_G(A) = \cup_{x \in A} N_G(x)$  and $N_G[A] = N_G(A) \cup A$.  
A {\em vertex cover} of a graph is a set of vertices such that 
each edge of the graph is incident to at least one vertex of the set.
Let $G$ be a graph and $uv$ be an edge of $G$. By subdividing
the edge $uv$ we mean forming a graph $H$ from $G$ by adding a new vertex $w$
and replacing the edge $uv$ by $uw$ and $wv$. 
Formally, $V (H) = V (G) \cup \{w\}$ and $E(H) = (E(G) -\{uv\}) \cup \{uw, wv\}$. 

 The study of domination and related subset problems is one of the fastest growing areas in graph theory.
	For a comprehensive introduction to the theory of domination in graphs we refer the reader
 to Haynes et al. \cite{hhs1}.
A {\em dominating set} for a graph $G$ is a subset $D\subseteq V(G)$ of 
vertices such that every vertex not in $D$ is adjacent to at least one vertex in $D$. 
The  {\em domination number} of $G$, denoted by $\gamma (G)$,
	is the smallest cardinality of a dominating set of $G$.
   A dominating set of $G$ with cardinality $\gamma(G)$ is called a 
   $\gamma$-{\em set of} $G$.
The concept of $\gamma$-bad/good vertices in graphs was introduced by 
Fricke et al. in \cite{fhhhl}.  A vertex $v$ of a graph $G$ is called:
\begin{itemize}
\item[(i)] \cite{fhhhl} $\gamma$-{\em good}, if $v$ belongs to some $\gamma$-set of $G$ and
\item[(ii)] \cite{fhhhl} $\gamma$-{\em bad}, if $v$ belongs to no $\gamma$-set of $G$.
\end{itemize}
A graph $G$ is said to be $\gamma$-{\em excellent} whenever all its vertices are $\gamma$-good \cite{fhhhl}. 
Brigham et al. \cite{bcd} defined 
(a) a vertex $v$ of a graph $G$ to be  {\em $\gamma$-critical} if $\gamma(G-v) < \gamma(G)$, 
 and $G$ to be  {\em vertex domination-critical} 
(from now  on called {\em vc-graph}) 
if each  vertex of $G$ is $\gamma$-critical. 
For a graph G we define:
$V^-(G) = \{x \in V(G) \mid \gamma(G-x) < \gamma(G)\}$.

It is often of interest to known how the value of a graph parameter $\mu$ is affected when a
 change is made in a graph, for instance vertex or edge removal, edge addition, 
edge subdivision  and edge contraction. 
In this connection, here we consider this question in the case
$\mu = \gamma$ when a path is added to a graph.

 Path-addition is an operation that takes a graph and adds an
internally vertex-disjoint path between two vertices together with a set
of supplementary edges. 
This operation can be considered as a natural generalization of the edge addition. 
Formally,  let $u$ and $v$ be distinct vertices of a graph $G$. 
The $(u,v)$-$P_k$-{\em addition graph} of $G$ is the graph 
$G_{u,v,{k-2}}$ obtained from disjoint union of $G$ and a path 
$P_k: x_0,x_1,..,x_{k-1}$, $k \geq 2$, by identifying 
the vertices $u$ and $x_0$, and identifying the vertices $v$ and $x_{k-1}$.   
When $k \geq 3$ we call $x_1, x_2,..,x_{k-2}$ {\em path-addition vertices}. 
By $pa_{\gamma}(u,v)$ we denote the minimum number $k$ such that 
$\gamma(G) < \gamma(G_{u,v,k})$. 
For every graph $G$ with at least $2$ vertices we define

\begin{itemize}
\item[$\vartriangleright$]  the {\em $e$-path addition ($\overline{e}$-path addition) 
number with respect to domination}, denoted $epa_{\gamma}(G)$ 
($\overline{e}pa_{\gamma}(G)$, respectively), to be 
\begin{itemize}
\item[$\bullet$] $epa_{\gamma}(G) = \min\{pa_{\gamma}(u,v) \mid u,v \in V(G), uv \in E(G)\}$, 
\item[$\bullet$]  $\overline{e}pa_{\gamma}(G) = \min\{pa_{\gamma}(u,v) \mid u,v \in V(G), uv \not \in E(G)\}$, and
\end{itemize}
\item[$\vartriangleright$]   the {\em upper $e$-path addition 
(upper $\overline{e}$-path addition) 
number with respect to domination}, denoted $Epa_{\gamma}(G)$ 
($\overline{E}pa_{\gamma}(G)$, respectively), to be 
\begin{itemize}
\item[$\bullet$] $Epa_{\gamma}(G) = \max\{pa_{\gamma}(u,v) \mid u,v \in V(G), uv \in E(G)\}$, 
\item[$\bullet$]  $\overline{E}pa_{\gamma}(G) = \max\{pa_{\gamma}(u,v) \mid u,v \in V(G), uv \not \in E(G)\}$.
\end{itemize}
\end{itemize}
If $G$ is complete then we write 
$\overline{E}pa_{\gamma} (G) = \overline{e}pa_{\gamma}(G) = \infty$, 
and if $G$ is edgeless then  $epa_{\gamma}(G) = Epa_{\gamma}(G) = \infty$. 
In what follows the subscript $\gamma$ will be omitted from the notation.

The remainder of this paper is organized as follows.
In Section 2:  (a) 	we prove that $1 \leq epa(G) \leq 3$ and $2\leq Epa(G) \leq 3$,  and 
                          (b)  we present necessary and sufficient conditions  for $pa(u,v) =i$, $i=1,2,3$, 
													        where $uv \in E(G)$. 
In Section 3: (c) we show that $1 \leq \overline{e}pa(G)  \leq \overline{E}pa(G) \leq 5$, and 
                         (d) we give necessary and sufficient conditions  for 
												       $\overline{e}pa(G)  = \overline{E}pa(G) = j$, $1 \leq j \leq 5$.
We conclude in Section 4 with  open problems.

We end this section with some known results which will be useful in proving
our main results.

\begin{lema}\label{subdiv} \cite{bv}
If $G$ is a graph and $H$ is any graph obtained from $G$ by
subdividing some edges of $G$, then $\gamma(H)  \geq \gamma(G)$.
\end{lema}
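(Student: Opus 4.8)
The plan is to reduce to the case of a single edge subdivision and then push a minimum dominating set of $H$ back to a dominating set of $G$ of no larger cardinality. Writing the set of subdivided edges as $S \subseteq E(G)$ and subdividing its members one at a time, I obtain a chain $G = H_0, H_1, \dots, H_m = H$ in which each $H_{i+1}$ arises from $H_i$ by subdividing a single edge (the remaining unsubdivided edges of $G$ persist at every stage, so this is well defined). It therefore suffices to prove $\gamma(H') \geq \gamma(G)$ when $H'$ is obtained from $G$ by subdividing one edge $uv$, replacing it by the two edges $uw$ and $wv$ with a fresh vertex $w$; the general statement then follows by chaining the inequalities $\gamma(H_{i+1}) \geq \gamma(H_i)$.

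So I would fix such a single subdivision and let $D$ be a $\gamma$-set of $H'$, so $|D| = \gamma(H')$, and construct a dominating set $D'$ of $G$ with $|D'| \leq |D|$, splitting into two cases according to whether $w \in D$. If $w \notin D$, I would take $D' = D$, noting $D \subseteq V(H') \setminus \{w\} = V(G)$. If $w \in D$, I would set $D' = (D \setminus \{w\}) \cup \{u\}$, which satisfies $|D'| \le |D|$. In either case $\gamma(G) \le |D'| \le |D| = \gamma(H')$, completing the single-edge step.

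The substantive point is verifying that $D'$ dominates $G$. The guiding observation is that every edge of $H'$ other than $uw$ and $wv$ is an edge of $G$, so whenever a vertex $x \in V(G)$ is dominated in $H'$ by a vertex $y \in V(G)$, the same domination holds in $G$. In the case $w \notin D$, the only vertex possibly dominated in $H'$ through one of the two new edges is $w$ itself, which lies outside $V(G)$; hence each $x \in V(G)$ keeps a dominator in $G$, and $D$ dominates $G$. In the case $w \in D$, the vertex $w$ dominates precisely $\{u,v,w\}$ in $H'$, so after deleting $w$ and inserting $u$ I need only recover the coverage of $u$ and $v$: the vertex $u$ dominates $u$ and, crucially because the edge $uv$ is present in $G$, also $v$, while every remaining vertex of $G$ still has a dominator in $D \setminus \{w\} \subseteq D'$.

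The only place any care is needed is this last case, and the key structural fact that makes it go through is that $u$ and $v$ are adjacent in $G$ (they are the endpoints of the subdivided edge); without that adjacency, swapping $w$ for a single endpoint would not restore the domination of the other. I expect no genuine obstacle beyond this bookkeeping.
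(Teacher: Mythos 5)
Your proof is correct. Note, however, that the paper itself gives no proof of this lemma: it is quoted verbatim from the reference of Bhattacharya and Vijayakumar (cited as \cite{bv} in the paper), so there is no internal argument to compare yours against. What you supply is a clean, self-contained, elementary proof of the cited result: the reduction to a single subdivision via the chain $G = H_0, H_1, \dots, H_m = H$ is legitimate (edges not yet subdivided persist at each stage, and the inequality composes), and the single-edge step is handled correctly in both cases. In particular, your case analysis is sound: when $w \notin D$ every original vertex retains its dominator because all edges of $H'$ between original vertices lie in $E(G)$, and when $w \in D$ the swap $D' = (D \setminus \{w\}) \cup \{u\}$ works precisely because $N_{H'}[w] = \{u,w,v\}$ and $uv \in E(G)$, so $u$ absorbs the coverage of both endpoints. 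You correctly identify this adjacency as the crux. The only value the paper's treatment adds over yours is brevity (a citation); your argument has the advantage of making the paper self-contained, and it is essentially the standard proof one would expect to find in the cited source.
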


\begin{lema}\label{folk}
 Let $G$ be a graph and $v \in V(G)$. 
\begin{itemize}
\item[(i)]  \cite{fhhhl} If $v$ is $\gamma$-bad, then $\gamma(G-v) = \gamma(G)$. 
\item[(ii)]  \cite{bcd}  $v$ is $\gamma$-critical if and only if $\gamma(G-v) = \gamma(G)-1$. 
\item[(iii)] \cite{fhhhl} If $v$ is $\gamma$-critical, then all its neighbors are $\gamma$-bad vertices of $G-v$.
\item[(iv)] \cite{wa} If $e \in E(\overline{G})$ then $\gamma(G) - 1 \leq \gamma(G+e) \leq \gamma(G)$. 
\end{itemize}
\end{lema}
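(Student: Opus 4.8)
The four assertions share a single elementary \emph{engine}, so the plan is to isolate that engine first and then read off each part. The engine is the two-sided observation that removing a vertex behaves predictably on minimum dominating sets: if $D'$ is a $\gamma$-set of $G-v$, then $D'\cup\{v\}$ dominates $G$, whence $\gamma(G)\le\gamma(G-v)+1$, i.e.\ $\gamma(G-v)\ge\gamma(G)-1$. This inequality holds for every vertex and is the workhorse behind (i), (ii), and (iii).

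For (i) I would first get the upper bound $\gamma(G-v)\le\gamma(G)$: since $v$ is $\gamma$-bad there is a $\gamma$-set $D$ of $G$ with $v\notin D$, and such a $D$ still dominates $G-v$. For the matching lower bound I argue by contradiction: if $\gamma(G-v)<\gamma(G)$, then a $\gamma$-set $D'$ of $G-v$ has $|D'|=\gamma(G)-1$ by the engine, so $D'\cup\{v\}$ is a dominating set of $G$ of size $\gamma(G)$, that is, a $\gamma$-set of $G$ containing $v$ — contradicting the badness of $v$. Part (ii) is then almost immediate: by definition $v$ is $\gamma$-critical exactly when $\gamma(G-v)<\gamma(G)$, and the engine's bound $\gamma(G-v)\ge\gamma(G)-1$ upgrades this strict inequality to the equality $\gamma(G-v)=\gamma(G)-1$; the converse direction is trivial. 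For (iii) I take $v$ to be $\gamma$-critical, so $\gamma(G-v)=\gamma(G)-1$ by (ii), and suppose some neighbour $u\in N_G(v)$ lies in a $\gamma$-set $D'$ of $G-v$. Reading $D'$ back inside $G$, it dominates every vertex of $G-v$ and also dominates $v$ through the edge $uv$; hence $D'$ dominates all of $G$ while $|D'|=\gamma(G)-1<\gamma(G)$, which is impossible. Therefore no neighbour of $v$ lies in a $\gamma$-set of $G-v$, i.e.\ every such neighbour is $\gamma$-bad in $G-v$.

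Part (iv) is where the real care is needed and is the step I expect to be the main obstacle. The upper bound $\gamma(G+e)\le\gamma(G)$ is free, since adding the edge $e$ only creates adjacencies, so any dominating set of $G$ remains one of $G+e$. For the lower bound, write $e=xy$, take a $\gamma$-set $D'$ of $G+e$, and ask which vertices can lose their domination when $e$ is deleted to recover $G$. As $e$ is the only edge distinguishing $G+e$ from $G$, any such vertex must be an endpoint of $e$ dominated solely across $e$, which forces the opposite endpoint into $D'$. Consequently at least one of $D'\cup\{x\}$ and $D'\cup\{y\}$ dominates $G$, so $\gamma(G)\le|D'|+1=\gamma(G+e)+1$, i.e.\ $\gamma(G+e)\ge\gamma(G)-1$. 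The only thing to verify carefully is this last claim, which follows from a short split on $|\{x,y\}\cap D'|$ into the cases where both, one, or neither endpoint lies in $D'$; in the first and third cases $D'$ itself already dominates $G$, and in the middle case exactly one endpoint is at risk and is repaired by adjoining it.
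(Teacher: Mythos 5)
Your proof is correct in all four parts, but there is nothing in the paper to compare it against: Lemma~\ref{folk} is stated as a collection of known results, with each item attributed to the literature (\cite{fhhhl}, \cite{bcd}, \cite{wa}), and the paper gives no proof of its own. Taken on its own merits, your argument is sound. The ``engine'' $\gamma(G-v)\ge\gamma(G)-1$ (from $D'\cup\{v\}$ dominating $G$ whenever $D'$ dominates $G-v$) is exactly the standard tool, and you deploy it correctly: in (i) the upper bound uses that a $\gamma$-set avoiding $v$ still dominates $G-v$, and the lower bound correctly turns a hypothetical drop into a $\gamma$-set of $G$ containing $v$; (ii) is the definitional equivalence upgraded by the engine; (iii) correctly observes that a $\gamma$-set of $G-v$ meeting $N_G(v)$ would dominate $G$ with $\gamma(G)-1$ vertices. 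In (iv) your case split on $|\{x,y\}\cap D'|$ is the right care point: when both or neither endpoint lies in $D'$, no vertex can be dominated solely across $e$, and when exactly one endpoint lies in $D'$, only the other endpoint is at risk and adjoining it repairs the set. The one thing worth making explicit in that middle case is the line you compressed: if $x\notin D'$ and $y\in D'$, then only $x$ can lose its dominator (every other vertex has unchanged closed neighborhood), so $D'\cup\{x\}$ works; this is exactly your ``repair'' step and it closes the argument.
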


In most cases, Lemma \ref{folk} will be used in the sequel without specific
reference.

\section{The adjacent case}

The aim of this section is to prove that  $1 \leq pa(u,v) \leq 3$  and 
 to find necessary and sufficient conditions for $pa(u,v) =i$, $i=1,2,3$, 
where $uv \in E(G)$.

\begin{observation}\label{chain1}
If $u$ and $v$ are adjacent vertices of a graph $G$,  
 then $\gamma(G)  = \gamma (G_{u,v,0}) 
                                      \leq \gamma (G_{u,v,k}) \leq \gamma (G_{u,v,k+1})$ for $k \geq 1$. 
\end{observation}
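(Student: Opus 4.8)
The plan is to split the asserted chain into three separate claims and attack them with different tools: the equality $\gamma(G) = \gamma(G_{u,v,0})$, the monotonicity $\gamma(G_{u,v,k}) \le \gamma(G_{u,v,k+1})$ for $k \ge 1$, and the single lower bound $\gamma(G) \le \gamma(G_{u,v,1})$. Chaining these three facts yields the displayed inequalities for every $k \ge 1$. The first two claims will reduce to the subdivision lemma (Lemma \ref{subdiv}) after inspecting the construction, whereas the third requires a short hands-on domination argument, and it is precisely here that the hypothesis $uv \in E(G)$ enters.

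First I would note that, since $u$ and $v$ are adjacent, the path $P_2\colon x_0,x_1$ contributes only the edge $uv$, which is already present in $G$; hence $G_{u,v,0}$ coincides with $G$ as a simple graph and $\gamma(G) = \gamma(G_{u,v,0})$. Next, for $k \ge 1$ the added $u$--$v$ path $u = x_0, x_1, \ldots, x_{k+1} = v$ of $G_{u,v,k}$ has $k+1 \ge 2$ edges, and subdividing any one of them, while leaving the original edge $uv$ untouched, produces exactly $G_{u,v,k+1}$. Lemma \ref{subdiv} then delivers $\gamma(G_{u,v,k}) \le \gamma(G_{u,v,k+1})$ for every $k \ge 1$. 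I would emphasize that this subdivision description breaks down for the step $k = 0 \to 1$: the graph $G_{u,v,1}$ retains the edge $uv$ in addition to the length-two path through $x_1$, so it is not a subdivision of $G$, and Lemma \ref{subdiv} does not cover this step.

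It therefore remains to prove $\gamma(G) \le \gamma(G_{u,v,1})$, which I regard as the crux, because Lemma \ref{folk}(iv) applied to $G_{u,v,1} = G' + uv$ (with $G'$ the graph obtained by subdividing $uv$ in $G$) yields only the weaker estimate $\gamma(G) - 1 \le \gamma(G_{u,v,1})$. For the sharp bound I would start from a $\gamma$-set $D$ of $G_{u,v,1}$, whose vertex set is $V(G) \cup \{x_1\}$ with $x_1$ adjacent precisely to $u$ and $v$, and transform it into a dominating set $D' \subseteq V(G)$ of $G$ with $|D'| \le |D|$: if $x_1 \in D$ I swap $x_1$ for $u$, and otherwise I keep $D' = D$. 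A short case analysis turning on whether $x_1 \in D$, and using that the only neighbours of $x_1$ are $u$ and $v$ together with the fact that $uv \in E(G)$ lets $u \in D'$ dominate $v$, confirms that $D'$ dominates $G$; hence $\gamma(G) \le |D'| \le |D| = \gamma(G_{u,v,1})$. Combining the three claims finally gives, for each $k \ge 1$, the full chain $\gamma(G) = \gamma(G_{u,v,0}) \le \gamma(G_{u,v,1}) \le \cdots \le \gamma(G_{u,v,k}) \le \gamma(G_{u,v,k+1})$, as required.
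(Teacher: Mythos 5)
Your proposal is correct and follows essentially the same route as the paper: the equality $\gamma(G)=\gamma(G_{u,v,0})$ is immediate, the crux $\gamma(G)\leq\gamma(G_{u,v,1})$ is handled by replacing $x_1$ with $u$ in a $\gamma$-set of $G_{u,v,1}$ (exactly the paper's $M_u=(M-\{x_1\})\cup\{u\}$ device, relying on $uv\in E(G)$), and the remaining inequalities follow from Lemma \ref{subdiv} applied to subdivisions of the added path. Your explicit remark that the step $k=0\to 1$ is not a subdivision and so needs the separate argument is a nice clarification of why the paper structures its proof this way.
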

\begin{proof} 
The equality $\gamma(G)  = \gamma (G_{u,v,0}) $ is obvious. 
 For any $\gamma$-set $M$ of  $G_{u,v,1}$ both 
$M_u = (M-\{x_1\})\cup\{u\}$ and  $M_v = (M-\{x_1\})\cup\{v\}$ are 
dominating sets of $G$, and at least one of them is a $\gamma$-set of $G_{u,v,1}$.
  Hence $\gamma(G) \leq \min \{|M_u|, |M_v|\} = \gamma(G_{u,v,1})$. 
The rest follows by Lemma \ref{subdiv}. 
\end{proof}

\begin{theorem} \label{pa1}
Let $u$ and $v$ be adjacent vertices of a graph $G$. 
 Then $\gamma(G) \leq \gamma (G_{u,v,1}) \leq \gamma(G) +1$ 
and  the following is true: 
\begin{itemize}
\item[(i)]  $\gamma(G) =\gamma (G_{u,v,1})$ if and only if
 at least one of  $u$ and $v$ is a $\gamma$-good vertex of $G$.
\item[(ii)]  $\gamma (G_{u,v,1}) =  \gamma(G) + 1$  if and only if
 both $u$ and $v$ are $\gamma$-bad vertices of $G$. 
\end{itemize}
 \end{theorem}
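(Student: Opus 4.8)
Write $H = G_{u,v,1}$; by definition $H$ is obtained from $G$ by adding a single new vertex $x_1$ adjacent to both $u$ and $v$. The plan is to first pin down the two-sided bound on $\gamma(H)$ and then read off the characterizations (i) and (ii), observing that (ii) is the logical complement of (i) once the bound is in hand.

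For the bound, the lower inequality $\gamma(G) \le \gamma(H)$ is already supplied by Observation \ref{chain1}. For the upper inequality I would start from a $\gamma$-set $D$ of $G$: if $D \cap \{u,v\} \ne \emptyset$, then $x_1$ is already dominated and $D$ dominates $H$; otherwise $D \cup \{x_1\}$ dominates $H$. In either case $\gamma(H) \le \gamma(G)+1$, so $\gamma(H) \in \{\gamma(G),\, \gamma(G)+1\}$.

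For the easy direction of (i), I would note that if, say, $u$ lies in some $\gamma$-set $D$ of $G$, then $D$ dominates $x_1$, so $D$ dominates $H$ and $\gamma(H) \le \gamma(G)$, forcing equality. For the converse I would take a $\gamma$-set $M$ of $H$ with $|M| = \gamma(G)$ and split into two cases according to whether $x_1 \in M$. If $x_1 \notin M$, then $M \subseteq V(G)$ is a dominating set of $G$ of minimum size, hence a $\gamma$-set, and since $x_1$ must be dominated in $H$ we get $M \cap \{u,v\} \ne \emptyset$, exhibiting a $\gamma$-good vertex. If $x_1 \in M$, I would replace it by $u$, setting $M' = (M - \{x_1\}) \cup \{u\}$, and verify that $M'$ dominates $G$: every vertex dominated in $H$ by a member of $M - \{x_1\}$ remains dominated, the only vertices that relied on $x_1$ are $u$ and $v$, and $u \in M'$ dominates both of them. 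Since $|M'| \le \gamma(G)$, the set $M'$ is a $\gamma$-set of $G$ containing $u$, so $u$ is $\gamma$-good.

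Statement (ii) then follows with no further work: because $\gamma(H) \in \{\gamma(G),\, \gamma(G)+1\}$, the equality $\gamma(H) = \gamma(G)+1$ holds precisely when $\gamma(H) = \gamma(G)$ fails, which by (i) means that neither $u$ nor $v$ is $\gamma$-good, i.e. both are $\gamma$-bad. The one place that genuinely requires care — and the step I expect to be the main obstacle — is the case $x_1 \in M$ in the converse of (i): the replacement of $x_1$ by $u$ restores the domination of $v$ only because $u$ and $v$ are adjacent, so one should invoke the hypothesis $uv \in E(G)$ explicitly at exactly that point.
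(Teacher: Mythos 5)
Your proposal is correct and follows essentially the same route as the paper: the lower bound via Observation \ref{chain1}, the upper bound via a $\gamma$-set of $G$ augmented by $x_1$, and the key exchange $(M-\{x_1\})\cup\{u\}$, which works precisely because $uv \in E(G)$. The only difference is organizational — you prove the forward direction of (i) directly and get (ii) by complementarity, whereas the paper proves the backward direction of (ii) by contradiction and gets the rest by the same dichotomy — but the underlying argument is identical.
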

 
\begin{proof}
The left side inequality follows by Observation \ref{chain1}. 
If $D$ is a $\gamma$-set of $G$ then $D \cup \{x_1\}$ is a dominating set of 
$G_{u,v,1}$, which implies $\gamma (G_{u,v,1}) \leq \gamma(G) +1$. 

If at least one of $u$ and $v$ belongs to  some $\gamma$-set $D_1$ of $G$, 
then $D_1$ is a dominating set of $G_{u,v,1}$. This clearly 
 implies  $\gamma(G) = \gamma(G_{u,v,1})$.

Let now both $u$ and $v$ are $\gamma$-bad vertices of $G$, 
and  suppose that $\gamma (G_{u,v,1}) = \gamma(G)$. 
In this case  for any $\gamma$-set $M$ of  $G_{u,v,1}$ is fulfilled 
 $u,v \not\in M$ and $x_1 \in M$. But then $(M-\{x_1\}) \cup \{u\}$
 is a $\gamma$-set for both $G$ and $G_{u,v,1}$, a contradiction.
\end{proof}

\begin{corollary} \label{1}
Let $G$ be a graph with edges. Then (a) $Epa(G) \geq  2$, 
and (b) $epa(G) = 1$ if and only if 
the set of all $\gamma$-bad vertices of $G$ is neither empty nor independent.
\end{corollary}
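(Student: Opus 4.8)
The plan is to deduce both statements directly from Theorem~\ref{pa1}(ii) and the definitions of $pa$, $epa$ and $Epa$. The preliminary reduction I would record first is that, for every edge $uv$, Observation~\ref{chain1} gives $\gamma(G_{u,v,0}) = \gamma(G)$, so $k=0$ never witnesses the strict inequality in the definition of $pa(u,v)$; consequently $pa(u,v) \geq 1$ for every edge. Feeding this into Theorem~\ref{pa1} yields the clean dichotomy I will use throughout: $pa(u,v) = 1$ precisely when $\gamma(G_{u,v,1}) = \gamma(G)+1$, that is, precisely when both $u$ and $v$ are $\gamma$-bad, and otherwise $pa(u,v) \geq 2$, which by part (i) happens precisely when at least one of $u,v$ is $\gamma$-good.

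For part (b) this is almost immediate. Since $epa(G)$ is the minimum of $pa(u,v)$ over all edges and each such value is at least $1$, we have $epa(G) = 1$ if and only if some edge attains $pa(u,v) = 1$, i.e. some edge $uv$ has both endpoints $\gamma$-bad. Writing $B$ for the set of $\gamma$-bad vertices of $G$, this says exactly that $B$ contains an edge, i.e. that $B$ is not independent. As a non-independent set is automatically nonempty, this is the same as ``neither empty nor independent,'' which is the claimed condition.

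For part (a) I must produce, for an arbitrary graph with at least one edge, a single edge $uv$ with $pa(u,v) \geq 2$; by the dichotomy above it suffices to find one edge with a $\gamma$-good endpoint, i.e. one edge not contained in $B$. I would argue by contradiction: assume every edge of $G$ lies inside $B$. Choose an edge and let $b$ be one of its endpoints, so $b \in B$ and $b$ has at least one neighbor. Because all edges lie inside $B$, every neighbor of $b$ is also in $B$; hence neither $b$ nor any of its neighbors belongs to any $\gamma$-set of $G$. But then no $\gamma$-set dominates $b$, contradicting that a $\gamma$-set dominates $V(G)$. Therefore some edge escapes $B$, giving the required edge with $pa(u,v) \geq 2$ and hence $Epa(G) \geq 2$.

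The only genuinely nontrivial step is the contradiction in part (a); everything else is bookkeeping over Theorem~\ref{pa1}. The point to be careful about there is that ``$\gamma$-bad'' means membership in no $\gamma$-set, so once $b$ and all its neighbors are $\gamma$-bad, any fixed $\gamma$-set fails to dominate $b$ --- and this is exactly where the hypothesis that $G$ has an edge (ensuring such a $b$ with a neighbor exists) is essential.
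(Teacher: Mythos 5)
Your proof is correct and follows exactly the route the paper intends: Corollary~\ref{1} is stated there without proof as an immediate consequence of Observation~\ref{chain1} (which forces $pa(u,v)\geq 1$ on edges) and the dichotomy of Theorem~\ref{pa1}. The only step the paper leaves tacit --- that the set of $\gamma$-bad vertices cannot contain every edge, since a vertex whose whole closed neighborhood avoids every $\gamma$-set would be undominated --- is precisely the point you spell out, and you do it correctly.
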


\begin{theorem}\label{epn=2}
Let $u$ and $v$ be adjacent vertices of a graph $G$. 
Then  $\gamma(G) \leq \gamma (G_{u,v,2}) \leq \gamma(G) +1$. 
Moreover, 
\begin{itemize}
\item[($\mathbb{A}$)] $\gamma (G_{u,v,2}) = \gamma(G) +1$ if and only if 
                                              one of the following holds:
\begin{itemize}
\item[(i)] both $u$ and $v$ are $\gamma$-bad vertices of $G$,
\item[(ii)] at least one of $u$ and $v$ is $\gamma$-good, 
            $u,v \not\in V^-(G)$ and each $\gamma$-set of $G$ contains 
            at most one of $u$ and $v$. 
\end{itemize}
\item[($\mathbb{B}$)] $\gamma (G_{u,v,2}) = \gamma(G) $ if and only if 
                                              at least one of the following is true:
\begin{itemize}
\item[(iii)] there exists a $\gamma$-set of $G$ which contains both $u$ and $v$, 
\item[(iv)] at least one of $u$ and $v$ is in  $V^-(G)$.  
\end{itemize}
\end{itemize}
\end{theorem}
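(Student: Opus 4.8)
The plan is to first establish the two-sided bound and then exploit that $\gamma(G_{u,v,2})$ can take only the values $\gamma(G)$ or $\gamma(G)+1$, so that statements $(\mathbb{A})$ and $(\mathbb{B})$ are logically complementary. It will then suffice to characterise exactly when $\gamma(G_{u,v,2})=\gamma(G)$ holds, and separately to check that the hypotheses of $(\mathbb{A})$ are precisely the negation of those of $(\mathbb{B})$. For the bound itself, the left inequality $\gamma(G)\le\gamma(G_{u,v,2})$ is immediate from Observation~\ref{chain1} (with $k=2$). For the right one I would take a $\gamma$-set $D$ of $G$ and note that $D\cup\{x_1\}$ dominates $G_{u,v,2}$, where $x_1,x_2$ are the two path-addition vertices with new edges $ux_1,x_1x_2,x_2v$; this gives $\gamma(G_{u,v,2})\le\gamma(G)+1$.

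For the $\Leftarrow$ direction of $(\mathbb{B})$ I would simply exhibit dominating sets of $G_{u,v,2}$ of size $\gamma(G)$. If (iii) holds, a $\gamma$-set $D$ containing both $u$ and $v$ already dominates $x_1$ (via $u$) and $x_2$ (via $v$), so $D$ works unchanged. If (iv) holds, say $u\in V^-(G)$, I would take a $\gamma$-set $D'$ of $G-u$ with $|D'|=\gamma(G)-1$ and use $D'\cup\{x_1\}$, which dominates $V(G)\setminus\{u\}$ via $D'$ and dominates $u,x_2,x_1$ via $x_1$. In both cases the lower bound forces $\gamma(G_{u,v,2})=\gamma(G)$.

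The core of the argument is the converse: assuming $\gamma(G_{u,v,2})=\gamma(G)$, fix a $\gamma$-set $M$ of $G_{u,v,2}$ with $|M|=\gamma(G)$ and project it by $M_G=M\cap V(G)$. Call $w\in\{u,v\}$ \emph{problematic} if $N_G[w]\cap M=\emptyset$, let $S$ be the set of problematic vertices, $s=|S|$, and $p=|M\cap\{x_1,x_2\}|$. The key observations are that $M_G$ dominates all of $V(G)\setminus S$ (inside $G$ only $u,v$ can fail to be dominated, and only when problematic), that a problematic $u$ forces $x_1\in M$ and a problematic $v$ forces $x_2\in M$ so that $p\ge s$, and that $w\notin M_G$ whenever $w\in S$. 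Hence $M_G\cup S$ dominates $G$ with $|M_G\cup S|=\gamma(G)-p+s\le\gamma(G)$, so it is a $\gamma$-set and $p=s$. I would then split on $s$: if $s=0$ then $x_1,x_2\notin M$, which forces $u,v\in M_G$ in order to dominate $x_1,x_2$, giving a $\gamma$-set containing both, i.e.\ (iii); if $s=1$, say $u$ problematic, then $M_G$ is a dominating set of $G-u$ of size $\gamma(G)-1$, so $u\in V^-(G)$, i.e.\ (iv); and $s=2$ is impossible, since there $M_G\cup\{u\}$ would dominate $G$ (using the edge $uv$ to cover $v$) with only $\gamma(G)-1$ vertices, contradicting minimality. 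This case analysis, and in particular the use of the adjacency $uv\in E(G)$ to eliminate $s=2$, is the main obstacle.

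Finally I would verify that $(\mathrm{i})\lor(\mathrm{ii})$ is exactly the negation of $(\mathrm{iii})\lor(\mathrm{iv})$, which upgrades the characterisation of $(\mathbb{B})$ into that of $(\mathbb{A})$. Writing $\neg(\mathrm{iii})\land\neg(\mathrm{iv})$ as ``no $\gamma$-set of $G$ contains both $u$ and $v$, and neither $u$ nor $v$ lies in $V^-(G)$'' and splitting on whether at least one of $u,v$ is $\gamma$-good yields precisely (ii) in the good case and (i) in the all-bad case; conversely (i) gives $\neg(\mathrm{iii})\land\neg(\mathrm{iv})$ because a $\gamma$-bad vertex lies in no $\gamma$-set and satisfies $\gamma(G-x)=\gamma(G)$ by Lemma~\ref{folk}(i), hence is not in $V^-(G)$, while (ii) contains $\neg(\mathrm{iii})$ and $\neg(\mathrm{iv})$ outright. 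Since $\gamma(G_{u,v,2})\in\{\gamma(G),\gamma(G)+1\}$, the chain $\gamma(G_{u,v,2})=\gamma(G)+1\Leftrightarrow\neg[(\mathrm{iii})\lor(\mathrm{iv})]\Leftrightarrow(\mathrm{i})\lor(\mathrm{ii})$ then delivers $(\mathbb{A})$.
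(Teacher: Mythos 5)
Your proposal is correct, but it reverses the paper's logical architecture and replaces its key tool. The paper proves $(\mathbb{A})$ directly — its forward direction leans on Theorem~\ref{pa1} (the $k=1$ case) together with the chain $\gamma(G_{u,v,1})\le\gamma(G_{u,v,2})\le\gamma(G)+1$, and its backward direction for (ii) is an ad hoc case analysis on which of $x_1,x_2$ lie in a $\gamma$-set $M$ of $G_{u,v,2}$ — and then declares $(\mathbb{B})$ ``immediate.'' You instead prove $(\mathbb{B})$ first and make it self-contained: the $\Leftarrow$ by exhibiting dominating sets of size $\gamma(G)$ (exactly as in the paper's nonadjacent analogue, Theorem~\ref{nepn=2}), and the $\Rightarrow$ by the projection argument with ``problematic'' vertices, where the inequality $p\ge s$ upgraded to $p=s$ organizes the case split $s\in\{0,1,2\}$ and the adjacency $uv\in E(G)$ is used precisely once, to kill $s=2$; I checked each step ($M_G$ dominates $V(G)\setminus S$, problematic vertices force $x_1$ resp.\ $x_2$ into $M$, $S\cap M_G=\emptyset$, and the three cases) and all are sound. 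You then recover $(\mathbb{A})$ by explicitly verifying that $(\mathrm{i})\lor(\mathrm{ii})$ is the negation of $(\mathrm{iii})\lor(\mathrm{iv})$ — using Lemma~\ref{folk}(i) to see that $\gamma$-bad vertices avoid $V^-(G)$ — which is exactly the complementarity the paper uses silently in the opposite direction when it derives $(\mathbb{B})$ from $(\mathbb{A})$. What each approach buys: the paper's proof is shorter because it recycles Theorem~\ref{pa1}, and it fits the section's incremental $k=1,2,3$ development; yours is independent of the $k=1$ theorem, treats all configurations of $M$ uniformly through one counting framework rather than separate cases, and makes the $(\mathbb{A})$/$(\mathbb{B})$ complementarity a proved statement rather than an implicit one.
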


\begin{proof}
The left side inequality follows by Observation \ref{chain1}. 
If $D$ is an arbitrary $\gamma$-set of $G$,  
then $D\cup \{x_1\}$ is a dominating set of $G_{u,v,2}$. 
Hence $\gamma(G_{u,v,2}) \leq \gamma(G) + 1$. 

($\mathbb{A}$) $\Rightarrow$  Assume that the equality  $\gamma (G_{u,v,2}) = \gamma(G) +1$ holds. 
By Theorem \ref{pa1} we know that $\gamma(G_{u,v,1}) \in \{\gamma(G), \gamma(G) + 1\}$. 
If $\gamma(G_{u,v,1}) = \gamma(G) + 1$ then 
again by Theorem \ref{pa1}, 
both $u$ and $v$ are $\gamma$-bad vertices of $G$. 
So let $\gamma(G) = \gamma (G_{u,v,1})$. 
Then at least one of $u$ and $v$ is a $\gamma$-good vertex of $G$ (Theorem \ref{pa1}). 
Clearly there is no $\gamma$-set of $G$ which contains both $u$ and $v$. 
If $u \in V^-(G)$ and $U$ is a $\gamma$-set of $G-u$, then 
$U \cup \{x_1\}$ is a dominating set of $G_{u,v,2}$ and $|U \cup \{x_1\}| = \gamma(G)$, 
a contradiction. Thus $u,v \not\in V^-(G)$.  

($\mathbb{A}$) $\Leftarrow$ If both $u$ and $v$ are $\gamma$-bad vertices of $G$,
             then $\gamma(G_{u,v,1}) = \gamma(G) + 1$(Theorem \ref{pa1}).  
             But we know that $\gamma(G_{u,v,1}) \leq \gamma(G_{u,v,2}) \leq \gamma(G) + 1$;
             hence  $\gamma(G_{u,v,2})= \gamma(G) + 1$. 
 Finally let  (ii) hold and $M$ a $\gamma$-set of $G_{u,v,2}$.  
 If $x_1,x_2 \not\in M$ then $u,v \in M$ which leads to $\gamma(G_{u,v,2}) > \gamma(G)$. 
If $x_1,x_2 \in M$ then $(M-\{x_1, x_2\}) \cup \{u,v\}$ is a dominating set of $G$
of cardinality more than $\gamma(G)$. 
Now let without loss of generality $x_1 \in M$ and $x_2 \not\in M$. 
If $M-\{x_1\}$ is a dominating set of $G$, then 
$\gamma(G) + 1 \leq |M| = \gamma(G_{u,v,2}) \leq \gamma(G) + 1$. 
So, let $M-\{x_1\}$ be no dominating set of $G$. 
Hence $M-\{x_1\}$ is a dominating set of $G-u$. 
Since $u \not\in V^-(G)$,
 $\gamma(G) \leq \gamma(G-u) \leq |M-\{x_1\}| <  \gamma(G_{u,v,2})$. 

($\mathbb{B}$) Immediately by ($\mathbb{A}$) and 
$\gamma(G) \leq \gamma (G_{u,v,2}) \leq \gamma(G) +1$. 
\end{proof}

The {\em independent domination number} of a graph $G$, 
denoted by $i(G)$, is the minimum size of an independent dominating set of $G$. 
It is obviously that $i(G) \geq \gamma(G)$. 
In a graph $G$, $i(G)$ is {\em strongly equal to} $\gamma(G)$, written $i(G) \equiv \gamma(G)$,  
if each $\gamma$-set of $G$ is independent. 
It remains an open problem to characterize the graphs $G$  with $i(G) \equiv \gamma(G)$ \cite{gh}. 

\begin{corollary}\label{22}
Let $G$ be a graph with edges. 
Then (a) $epa(G) \geq  2$ if and only if 
the set of all $\gamma$-bad vertices is  either empty or independent, 
and (b) $Epa(G)=2$  if and only if $i(G) \equiv \gamma(G)$. 
\end{corollary}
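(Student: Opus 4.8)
The plan is to reduce both parts to results already established---Corollary~\ref{1} for part (a) and Theorem~\ref{epn=2} for part (b)---leaving a single structural fact about $V^-(G)$ as the only genuine content.

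For part (a): since $\gamma(G_{u,v,0}) = \gamma(G)$ by Observation~\ref{chain1}, we have $pa(u,v) \geq 1$ for every edge $uv$, hence $epa(G) \geq 1$ for every graph with edges. Thus $epa(G) \geq 2$ is exactly the negation of $epa(G) = 1$. Corollary~\ref{1}(b) states that $epa(G) = 1$ iff the set of $\gamma$-bad vertices is neither empty nor independent; negating this statement is precisely the claim.

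For part (b): Corollary~\ref{1}(a) gives $Epa(G) \geq 2$, so $Epa(G) = 2$ is equivalent to $Epa(G) \leq 2$, i.e.\ to $pa(u,v) \leq 2$ for every edge $uv$. Since the chain $\gamma(G) = \gamma(G_{u,v,0}) \leq \gamma(G_{u,v,1}) \leq \gamma(G_{u,v,2})$ is nondecreasing (Observation~\ref{chain1}), $pa(u,v) \leq 2$ holds iff $\gamma(G_{u,v,2}) > \gamma(G)$, which by Theorem~\ref{epn=2} means $\gamma(G_{u,v,2}) = \gamma(G) + 1$. Hence $Epa(G) = 2$ iff no edge $uv$ satisfies $\gamma(G_{u,v,2}) = \gamma(G)$. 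I would then read off part~($\mathbb{B}$) of Theorem~\ref{epn=2}: $\gamma(G_{u,v,2}) = \gamma(G)$ iff some $\gamma$-set contains both $u$ and $v$, or at least one of $u,v$ lies in $V^-(G)$. Consequently $Epa(G) = 2$ holds iff, for every edge $uv$, no $\gamma$-set contains both endpoints \emph{and} neither endpoint lies in $V^-(G)$. Ranging over all edges, the first condition says exactly that every $\gamma$-set is independent, i.e.\ $i(G) \equiv \gamma(G)$; the second says that every vertex of $V^-(G)$ is isolated.

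The main obstacle, and the only nonroutine step, is to show that these two conditions collapse to the first---that $i(G) \equiv \gamma(G)$ already forces every vertex of $V^-(G)$ to be isolated. I would argue by contradiction: assume $i(G) \equiv \gamma(G)$ and let $x \in V^-(G)$ have a neighbour $y$. Since $x \in V^-(G)$ we have $\gamma(G-x) = \gamma(G) - 1$, so for any $\gamma$-set $D$ of $G-x$ the set $D \cup \{x\}$ is a $\gamma$-set of $G$, hence independent, which forces $y \notin D$. But $D \cup \{y\}$ then also dominates $G$ (as $y$ dominates $x$) and has cardinality $\gamma(G)$, so it too is a $\gamma$-set; and since $D$ dominates $y$ in $G-x$ while $y \notin D$, some $z \in D$ is adjacent to $y$, so the $\gamma$-set $D \cup \{y\}$ contains the adjacent pair $y,z$---contradicting $i(G) \equiv \gamma(G)$. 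With this fact the bracketed condition reduces to $i(G) \equiv \gamma(G)$, finishing (b). I expect no further difficulty, the remaining steps being bookkeeping with the monotone chain and a direct application of Theorem~\ref{epn=2}.
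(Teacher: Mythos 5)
Your proof is correct and takes essentially the same route as the paper's: part (a) is read off from Corollary~\ref{1}, and part (b) combines Corollary~\ref{1}(a) with the characterization in Theorem~\ref{epn=2} together with the key lemma that independence of all $\gamma$-sets forces $V^-(G)$ to contain no vertex with a neighbour, which you prove by the identical construction (a $\gamma$-set of $G-x$ augmented by a neighbour of $x$). If anything, your write-up is slightly more careful than the paper's, which asserts that ``$V^-(G)$ is empty'' (not quite true when $G$ has isolated vertices, e.g.\ $K_2 \cup K_1$), whereas you conclude only that every vertex of $V^-(G)$ is isolated --- which is exactly what the application to adjacent pairs requires.
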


\begin{proof}
(a) Immediately by Corollary \ref{1}.

(b) $\Rightarrow$  Let $Epa(G)=2$. If $D$ is a $\gamma$-set of $G$ and 
                                     $u,v \in D$ are adjacent, then $D$ is a dominating set of 
																		$G_{u,v,2}$, a contradiction. 

(b) $\Leftarrow$      Let all $\gamma$-sets of $G$ be independent. 
                                     Suppose $u \in V^-(G)$ and $D$ a $\gamma$-set of $G-u$. 
																		Then $D_1 = D \cup \{v\}$ is a $\gamma$-set of $G$, 
																		where $v$ is any neighbor of $u$. But $D_1$ is not independent. 
																		Hence $V^-(G)$ is empty. 
																		Thus, for any $2$ adjacent vertices $u$ and $v$ of $G$ is fulfilled 
																		either ($\mathbb{A}$)(i) or ($\mathbb{A}$)(ii) of Theorem \ref{epn=2}. 
																		Therefore $Epa(G) \leq 2$.  The result now follows by Corollary \ref{1}. 
\end{proof}

Denote by $\mathbb{Z}_n = \{0, 1, . . . , n -1\}$  the additive  group of order $n$. 
Let $S$ be a subset of $\mathbb{Z}_n$ such that $0\not\in S$ and $x \in S$ implies $-x \in S$. 
The {\em circulant graph} with distance set $S$ is the graph $C(n; S)$ with vertex set $\mathbb{Z}_n$
 and vertex $x$ adjacent to vertex $y$ if and only if $x - y \in S$.

Let $n\geq 3$ and $k \in \mathbb{Z}_n  - \{0\}$. 
The {\em generalized Petersen graph} $P(n, k)$ is the graph on the vertex-set
$\{x_i , y_i \mid  i \in \mathbb{Z}_n \}$ with adjacencies $x_ix_{i+1}, x_iy_i$,
and $y_i y_{i+k}$ for all $i$.

\begin{example}\label{effdom}
A special case of graphs $G$ with $Epa(T) =2$ 
are graphs for which each $\gamma$-set is efficient dominating 
(an efficient dominating set  in a graph $G$ is a set $S$ such that 
$\{N[s] \mid s \in S\}$ is a partition of $V(G)$). 
We list several examples of such graphs \cite{samtc}:
 \begin{itemize}
\item[(a)]  A {\em crown graph} $H_{n,n}$, $n \geq 3$, which is obtained from 
       the complete bipartite graph $K_{n,n}$ by removing a perfect matching. 
\item[(b)]  Circulant graphs $G = C(n=(2k+1)t; \{1,..,k\} \cup \{n-1,...,n-k\} )$, where  $k,t \geq 1$. 
      
\item[(c)]  Circulant graphs $G = C(n; \{\pm 1, \pm s\} )$, where $2 \leq s \leq n-2$,   $s \not=  n/2$, 
									$5 | n$ and $s \equiv \pm2 \pmod 5$.
			
\item[(d)] The {\em generalized Petersen graph} $P(n, k)$, where		$n \equiv 0 \pmod 4$ and $k$ is odd.
\end{itemize}							
\end{example}

\begin{theorem}\label{tri} 
If $u$ and $v$ are adjacent vertices of a graph $G$, then 
$\gamma(G_{u,v,3}) = \gamma(G) + 1$. 
\end{theorem}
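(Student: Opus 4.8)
The plan is to establish the two inequalities $\gamma(G_{u,v,3}) \le \gamma(G)+1$ and $\gamma(G_{u,v,3}) \ge \gamma(G)+1$ separately. Throughout I would write the added path as $P_5: x_0,x_1,x_2,x_3,x_4$ with $u=x_0$ and $v=x_4$, so that $x_1,x_2,x_3$ are the path-addition vertices, and I would keep in mind that, since $u$ and $v$ are adjacent in $G$, the edge $uv$ survives in $G_{u,v,3}$.

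For the upper bound I would take a $\gamma$-set $D$ of $G$ and verify that $D\cup\{x_2\}$ dominates $G_{u,v,3}$: the vertex $x_2$ dominates $\{x_1,x_2,x_3\}$, while $x_0=u$ and $x_4=v$ lie in $V(G)$ and are dominated by $D$. This yields $\gamma(G_{u,v,3})\le\gamma(G)+1$ immediately.

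The substance is the lower bound, for which it suffices to rule out $\gamma(G_{u,v,3})=\gamma(G)$, since Observation~\ref{chain1} already gives $\gamma(G_{u,v,3})\ge\gamma(G)$. I would show that every dominating set $M$ of $G_{u,v,3}$ satisfies $|M|\ge\gamma(G)+1$. The key structural observation is that, inside $G_{u,v,3}$, the path-addition vertices meet $V(G)$ only through the edges $ux_1$ and $vx_3$; hence $M_G:=M\cap V(G)$ dominates every vertex of $V(G)\setminus\{u,v\}$, no matter what $M$ is. Moreover, since $x_2$ must be dominated and its only neighbors are $x_1,x_3$, the count $a:=|M\cap\{x_1,x_2,x_3\}|$ is at least $1$. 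I would then split on $a$. When $a\ge 2$, the set $M_G\cup\{u\}$ dominates $G$ — here the hypothesis $uv\in E(G)$ is essential, since the single vertex $u$ covers both $u$ and $v$ — and has size at most $|M|-1$, giving $\gamma(G)\le|M|-1$.

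The case $a=1$ is where I expect the real work, and it is the main obstacle: I must show that $M_G$ by itself already dominates $G$, so that $\gamma(G)\le|M_G|=|M|-1$. I would treat the three positions of the unique path-addition vertex in $M$ separately. If it is $x_1$, then domination of $x_3$ (whose neighbors are $x_2,v$ with $x_2\notin M$) forces $v\in M_G$, and since $uv\in E(G)$ this vertex dominates both $u$ and $v$; the case $x_3\in M$ is symmetric with $u\in M_G$. If the vertex is $x_2$, then $x_1,x_3\notin M$, so the domination of $u$ in $G_{u,v,3}$, whose closed neighborhood is $N_G[u]\cup\{x_1\}$, must already be witnessed inside $N_G[u]\subseteq V(G)$, and likewise for $v$; thus $M_G$ dominates $u$ and $v$ directly. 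In every subcase $M_G$ dominates $G$, completing the lower bound, and combining it with the upper bound gives $\gamma(G_{u,v,3})=\gamma(G)+1$.
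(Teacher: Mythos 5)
Your proposal is correct and follows essentially the same route as the paper: the upper bound via $D\cup\{x_2\}$, and the lower bound by analyzing which of $x_1,x_2,x_3$ lie in a minimum dominating set $M$ of $G_{u,v,3}$, exploiting the edge $uv$ to replace path-addition vertices by $u$ (or to let $v\in M$ cover $u$). Your bookkeeping via the count $a=|M\cap\{x_1,x_2,x_3\}|$ is just a slightly different packaging of the paper's case split, and your case $a=1$ with $x_2\in M$ makes explicit a subcase the paper dismisses as ``clear.''
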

\begin{proof}
If $D$ is a $\gamma$-set of $G$, then $D \cup \{x_2\}$ is a dominating set of $G$. 
Hence $\gamma(G_{u,v,3}) \leq \gamma(G) + 1$. 

Let $M$ be a $\gamma$-set of $G_{u,v,3}$.  Then at least one of 
$x_1, x_2$ and $x_3$ is in $M$. If $x_2 \in M$ then clearly $\gamma(G_{u,v,3}) = \gamma(G) + 1$. 
If $x_2 \not\in M$ and $x_1,x_3 \in M$,  then $(M-\{x_1,x_3\}) \cup \{u\}$ is a dominating 
set of $G$.  If $x_2,x_3 \not\in M$ and $x_1 \in M$, then $v \in M$ and 
$M-\{x_1\}$ is a dominating set of $G$. All this leads to $\gamma(G_{u,v,3}) = \gamma(G) + 1$. 
 \end{proof}

\begin{corollary}\label{epn3}
Let $G$ be a graph with edges. Then  $epa(G) \leq Epa(G) \leq 3$. 
Moreover, 
(a) $Epa(G) =3$ if and only if $G$ has a $\gamma$-set that is not independent,
      and 
(b) $epa(G) = 3$ 	if and only if for each  pair of adjacent vertices $u$ and $v$ 
          at least one of (iii) and (iv) of Theorem \ref{epn=2}  ($\mathbb{B}$) is valid. 		
\end{corollary}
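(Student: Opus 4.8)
The plan is to assemble the statement directly from the three preceding theorems together with the two earlier corollaries, since all the quantitative work has already been carried out. First I would dispose of the bounds $epa(G) \leq Epa(G) \leq 3$. The left inequality is immediate: because $G$ has an edge, the set over which $pa(u,v)$ is minimized (respectively maximized) is nonempty, and a minimum never exceeds a maximum. For the right inequality, Theorem \ref{tri} gives $\gamma(G_{u,v,3}) = \gamma(G)+1 > \gamma(G)$ for every adjacent pair $u,v$, so $pa(u,v) \leq 3$ for each such pair; taking the maximum yields $Epa(G) \leq 3$.

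For part (a), I would first observe that $Epa(G)$ can only take the value $2$ or $3$: the lower bound $Epa(G) \geq 2$ is Corollary \ref{1}(a), and the upper bound was just established. Hence $Epa(G) = 3$ is equivalent to $Epa(G) \neq 2$. Now Corollary \ref{22}(b) characterizes $Epa(G) = 2$ as $i(G) \equiv \gamma(G)$, that is, every $\gamma$-set of $G$ is independent. Negating this equivalence gives exactly the claim: $Epa(G) = 3$ if and only if some $\gamma$-set of $G$ fails to be independent.

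For part (b), the crucial remark is that $epa(G) = 3$ holds precisely when $pa(u,v) = 3$ for every adjacent pair $u,v$ — since $epa(G)$ is the minimum of the values $pa(u,v)$ and all of them are at most $3$, this minimum equals $3$ exactly when none of them drops below $3$. So it suffices to characterize $pa(u,v) = 3$. By Theorem \ref{tri} the quantity $\gamma(G_{u,v,3})$ already exceeds $\gamma(G)$, so $pa(u,v) = 3$ is equivalent to $\gamma(G_{u,v,2}) = \gamma(G)$. Here I would invoke the monotonicity chain $\gamma(G) = \gamma(G_{u,v,0}) \leq \gamma(G_{u,v,1}) \leq \gamma(G_{u,v,2})$ from Observation \ref{chain1} to note that $\gamma(G_{u,v,2}) = \gamma(G)$ automatically forces $\gamma(G_{u,v,1}) = \gamma(G)$ as well, so no separate condition on the single-vertex addition is needed. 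Finally, Theorem \ref{epn=2}($\mathbb{B}$) identifies $\gamma(G_{u,v,2}) = \gamma(G)$ with the disjunction of conditions (iii) and (iv), completing the characterization.

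The argument is essentially bookkeeping, and I expect no serious obstacle. The one place demanding care is part (b): one must resist imposing the redundant requirement $\gamma(G_{u,v,1}) = \gamma(G)$ in addition to $\gamma(G_{u,v,2}) = \gamma(G)$. The monotonicity observation shows the former is subsumed by the latter, and overlooking this point would make the stated criterion appear incomplete.
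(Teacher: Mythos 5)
Your proof is correct and follows essentially the same route as the paper, which simply assembles the corollary from Theorem \ref{tri}, Theorem \ref{epn=2}($\mathbb{B}$), and Corollaries \ref{1} and \ref{22}. Your explicit use of the monotonicity chain from Observation \ref{chain1} to show that $\gamma(G_{u,v,2})=\gamma(G)$ subsumes the condition $\gamma(G_{u,v,1})=\gamma(G)$ is a nice piece of bookkeeping that the paper's terse ``immediately by'' proof leaves implicit.
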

\begin{proof}
By Theorem \ref{tri} , $epa(G) \leq Epa(G) \leq 3$. 

(a) Immediately by Corollary \ref{22} and Theorem \ref{tri}.

(b) Theorem \ref{epn=2}, Corollary \ref{22} and Theorem \ref{tri}  
       together immediately imply the required. 
\end{proof}

\begin{corollary}\label{epn3v}
Let $G$ be a graph with edges.  
If $V^-(G)$ has a subset which is a vertex cover of $G$, 
then $epa(G) = 3$. In particular, if $G$ is a vc-graph then $epa(G) = 3$.
\end{corollary}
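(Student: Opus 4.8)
The plan is to reduce everything to the characterization of $epa(G) = 3$ already supplied by Corollary \ref{epn3}(b), which asserts that $epa(G) = 3$ precisely when every pair of adjacent vertices $u,v$ satisfies at least one of conditions (iii), (iv) of Theorem \ref{epn=2}($\mathbb{B}$). Since Theorem \ref{tri} guarantees $\gamma(G_{u,v,3}) = \gamma(G)+1$ for adjacent $u,v$, we already have $epa(G) \le Epa(G) \le 3$; thus it suffices to verify that the hypothesis forces condition (iv) to hold on every edge, which pins $epa(G)$ down to exactly $3$.

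First I would fix a subset $C \subseteq V^-(G)$ that is a vertex cover of $G$; such a $C$ exists by assumption. By the defining property of a vertex cover, every edge $uv \in E(G)$ is incident with at least one vertex of $C$, say $u \in C$. Then $u \in C \subseteq V^-(G)$, so at least one of $u$ and $v$ lies in $V^-(G)$, which is exactly condition (iv) of Theorem \ref{epn=2}($\mathbb{B}$). As $uv$ was an arbitrary edge, condition (iv) is valid for each adjacent pair, and Corollary \ref{epn3}(b) immediately yields $epa(G) = 3$.

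For the final clause I would argue that if $G$ is a vc-graph, then by definition every vertex of $G$ is $\gamma$-critical, i.e. $V^-(G) = V(G)$. The whole vertex set $V(G)$ is trivially a vertex cover of $G$, so the hypothesis of the first part holds with $C = V(G)$, and $epa(G) = 3$ follows as a special case.

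There is essentially no difficult step here: once condition (iv) is identified as the relevant one, the statement is a one-line consequence of the earlier characterization. The only point that warrants brief care is checking that the blanket hypothesis ``$G$ has edges'' keeps $epa(G)$ well defined and lets the reduction through Corollary \ref{epn3} apply; both are immediate.
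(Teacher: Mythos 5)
Your proof is correct and follows exactly the route the paper intends: the corollary is stated without proof there because, as you show, a vertex cover $C \subseteq V^-(G)$ forces condition (iv) of Theorem \ref{epn=2}($\mathbb{B}$) on every edge, so Corollary \ref{epn3}(b) gives $epa(G)=3$, and the vc-graph case is the instance $C = V(G) = V^-(G)$. Nothing is missing; your verification that the argument reduces cleanly to the earlier characterization is precisely the omitted ``immediate'' step.
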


We need to define the following classes of graphs $G$ with $\Delta(G) \geq 1$: 
\begin{itemize}
\item[$\bullet$] $\mathcal{A} = \{G \mid epa(G) = 3\}$,
\item[$\bullet$]  $\mathcal{A}_1 = \{G \mid V^-(G) \mbox{\ is a vertex cover of \ } G\}$, 
\item[$\bullet$]  $\mathcal{A}_2 = \{G \mid  \mbox{\  each two adjacent verices belongs to some \ } 
                                     \gamma\mbox{-set of \ } G\}$, 
\item[$\bullet$]  $\mathcal{A}_3 = \{G \mid  G \mbox{\ is a vc-graph}\}$.
   \end{itemize}

Clearly, $\mathcal{A}_3 \subseteq \mathcal{A}_1$ and by 
Corolaries \ref{epn3} and \ref{epn3v}, $\mathcal{A}_1 \cup \mathcal{A}_2 \subseteq \mathcal{A}$.
These relationships are illustrated in the Venn diagram of Fig. \ref{fig:opit1}(left). 
To continue we need to relabel this diagram in six regions $R_0-R_5$ as shown in Fig. \ref{fig:opit1}(right). 
In what follows in this section we show that none of $R_0-R_5$ is empty.
The {\em corona}  of a graph $H$ is the graph $G=H\circ K_1$ obtained from
$H$  by adding a degree-one neighbor to every vertex of $H$.

\begin{figure}[htbp]
	\centering
		\includegraphics{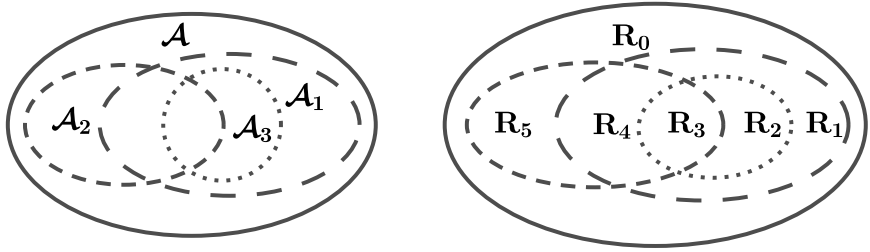}
	\caption{{\em Left}:  Classes of graphs with $epa = 3$. {\it Right}: Regions of Venn diagram.}
	\label{fig:opit1}
\end{figure}

\begin{remark}\label{-0}
It is easy to see that all the following hold: 
\begin{itemize}
\item[(i)]  If $H$ is a connected graph of  order $n \geq 2$, then $G=H\circ K_1 \in \mathbf{R_0}$. 
\item[(ii)] Let $G$ be a graph obtained by $C_7: x_0,x_1,..,x_6,x_0$ by adding 
                    a vertex $y$ and edges $yx_0, yx_2$. Then $G$ is in $\mathbf{R_1}$.
\item[(iii)]  The graph $G_{10}$  depicted in  Fig. \ref{fig:vc3} is in 
                       $\mathcal{A}_3$  and $\gamma(G_{10}) = 3$ \cite{aaa}. 
												It is obvious that no $\gamma$-set of $G_{10}$ contains both $u$ and $v$. 
												Hence $G_{10} \in \mathbf{R_2}$.                       
\item[(iv)] $C_{3k+1} \in \mathbf{R_3}$ for all $k \geq 1$.
\item[(v)]  $K_{2,n} \in \mathbf{R_4}$ for all $n \geq 3$.
\item[(v)]  $K_{n,n} \in \mathbf{R_5}$ for all $n \geq 3$.
\end{itemize}
Thus all regions $\mathbf{R_0}, \mathbf{R_1}, \mathbf{R_2}, \mathbf{R_3}, \mathbf{R_4}, \mathbf{R_5}$ 
                                are nonempty.
\end{remark}

\begin{figure}[htbp]
	\centering
		\includegraphics{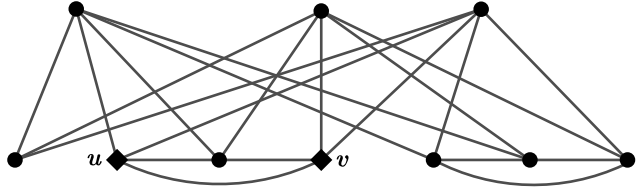}
	\caption{Graph $G_{10}$ is in $\mathbf{R_2}$}
	\label{fig:vc3}
\end{figure}

\newpage

\section{The nonadjacent case}  
In this section we show that  $1 \leq \overline{e}pa(G) \leq \overline{E}pa(G) \leq 5$ and 
we obtain necessary and sufficient conditions  for 
												       $\overline{e}pa(G)  = \overline{E}pa(G) = j$, $1 \leq j \leq 5$.

We begin with an easy observation which is an immediate consequence by 
Lemma \ref{folk}(iv) and Lemma \ref{subdiv}.

\begin{observation}\label{chain2}
Let $u$ and $v$ be nonadjacent vertices of a graph $G$. 
 Then $\gamma(G) -1 \leq \gamma (G_{u,v,0}) \leq \gamma(G)$ and $\gamma (G_{u,v,k}) \leq \gamma (G_{u,v,k+1})$ for $k \geq 0$. 
\end{observation}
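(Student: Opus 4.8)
The plan is to obtain both inequalities directly from the two cited lemmas, once the relevant graph operations are correctly identified. First I would unwind the notation in the base case. The graph $G_{u,v,0}$ arises from $G$ by identifying $u$ with $x_0$ and $v$ with $x_1$ of the path $P_2 \colon x_0, x_1$; since $uv \notin E(G)$ by hypothesis, this graph is exactly $G+uv$, the edge addition of the nonadjacent pair $uv$. Hence $uv \in E(\overline{G})$, and Lemma \ref{folk}(iv) applies verbatim to give $\gamma(G)-1 \leq \gamma(G+uv) = \gamma(G_{u,v,0}) \leq \gamma(G)$, which is the first assertion.

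For the monotonicity chain I would argue that, for every $k \geq 0$, the graph $G_{u,v,k+1}$ is obtained from $G_{u,v,k}$ by subdividing a single edge on the added path. Indeed, $G_{u,v,k}$ contains the $u$--$v$ path $u = x_0, x_1, \ldots, x_k, x_{k+1} = v$ having exactly $k$ path-addition vertices and $k+1$ edges; inserting one new vertex on any one of these edges produces a $u$--$v$ path with $k+1$ internal vertices, which is precisely $G_{u,v,k+1}$. Applying Lemma \ref{subdiv} to this single subdivision then yields $\gamma(G_{u,v,k}) \leq \gamma(G_{u,v,k+1})$ for each $k \geq 0$, including the base step $k=0$, where the edge $uv$ of $G_{u,v,0}$ is subdivided to form $G_{u,v,1}$.

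Since both steps are literal invocations of the quoted lemmas once the graphs are correctly identified, there is essentially no obstacle here; the only point requiring a moment of care is the bookkeeping of the index shift, namely that $G_{u,v,j}$ carries $j$ path-addition vertices, so that $G_{u,v,0}$ is precisely the edge addition and that passing from index $k$ to index $k+1$ corresponds to a single subdivision rather than two. Making that identification explicit is all that is needed to close the argument.
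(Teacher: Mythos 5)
Your proof is correct and follows exactly the route the paper intends: the paper dismisses this observation as an immediate consequence of Lemma \ref{folk}(iv) (edge addition) and Lemma \ref{subdiv} (edge subdivision), and your argument simply makes explicit the two identifications involved, namely that $G_{u,v,0}=G+uv$ and that $G_{u,v,k+1}$ arises from $G_{u,v,k}$ by a single subdivision of a path edge. The index bookkeeping you carry out is accurate, so nothing is missing.
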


 \begin{theorem} \label{pa1over}
 Let $u$ and $v$ be nonadjacent vertices of a graph $G$. 
Then $\gamma(G) -1 \leq \gamma (G_{u,v,1}) \leq \gamma(G) +1$. 
  Moreover,
\begin{itemize}
\item[(i)]  $\gamma(G) - 1 =\gamma (G_{u,v,1})$ if and only if 
                        $\gamma(G-\{u,v\}) = \gamma(G) - 2$.  
\item[(ii)]			                          $\gamma (G_{u,v,1}) = \gamma(G) +1$ 
                         	if and only if 	both $u$ and $v$ are $\gamma$-bad vertices of $G$, 	
													$u \not\in V^-(G-v)$ and $v \not\in V^-(G-u)$.
	                       If $\gamma (G_{u,v,1}) = \gamma(G) +1$ then $x_1 \in  V^-(G_{u,v,1})$. 
\end{itemize}
\end{theorem}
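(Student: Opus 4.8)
The plan is to exploit the elementary structure of $G_{u,v,1}$: since $k-2=1$ corresponds to the path $P_3: x_0,x_1,x_2$ with $x_0=u$ and $x_2=v$, the graph $G_{u,v,1}$ is simply $G$ together with one new vertex $x_1$ whose only neighbors are $u$ and $v$. Thus $G_{u,v,1}-x_1=G$ and $N_{G_{u,v,1}}[x_1]=\{u,v,x_1\}$. The two outer inequalities are immediate: for a $\gamma$-set $D$ of $G$ the set $D\cup\{x_1\}$ dominates $G_{u,v,1}$, giving $\gamma(G_{u,v,1})\le\gamma(G)+1$, while the lower bound $\gamma(G)-1\le\gamma(G_{u,v,1})$ follows directly from Observation~\ref{chain2}. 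Throughout I will use the remark that if a $\gamma$-set $M$ of $G_{u,v,1}$ omits $x_1$, then $M\subseteq V(G)$ must contain $u$ or $v$ in order to dominate $x_1$, and so is a dominating set of $G$; I will also use the folklore bound that deleting a single vertex cannot lower the domination number by more than one.

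For part (i), the backward implication is routine: from a $\gamma$-set $W$ of $G-\{u,v\}$ of size $\gamma(G)-2$, the set $W\cup\{x_1\}$ dominates $G_{u,v,1}$ and has size $\gamma(G)-1$, which with the lower bound forces equality. For the forward implication I start from a $\gamma$-set $M$ of $G_{u,v,1}$ of size $\gamma(G)-1$. The remark above shows $x_1\in M$. The crucial step is to rule out $u\in M$ and $v\in M$: if, say, $u\in M$, then $M-\{x_1\}$ dominates every vertex of $G$ except possibly $v$, hence is a dominating set of $G-v$ of size $\gamma(G)-2$, contradicting $\gamma(G-v)\ge\gamma(G)-1$. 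Therefore $u,v\notin M$, and $M-\{x_1\}\subseteq V(G)-\{u,v\}$ is a dominating set of $G-\{u,v\}$ of size $\gamma(G)-2$; together with the general bound $\gamma(G-\{u,v\})\ge\gamma(G)-2$ this gives the claimed equality.

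For part (ii), I first translate the hypotheses. If $u$ and $v$ are $\gamma$-bad then Lemma~\ref{folk}(i) gives $\gamma(G-u)=\gamma(G-v)=\gamma(G)$, and in that case the conditions $u\notin V^-(G-v)$ and $v\notin V^-(G-u)$ read precisely as $\gamma(G-\{u,v\})\ge\gamma(G)$. For the forward direction, assume $\gamma(G_{u,v,1})=\gamma(G)+1$: each of $u,v$ must be $\gamma$-bad, since a $\gamma$-set of $G$ containing $u$ (or $v$) already dominates $x_1$ and hence $G_{u,v,1}$, forcing $\gamma(G_{u,v,1})\le\gamma(G)$; and, the badness of $v$ giving $\gamma(G-v)=\gamma(G)$, if $u\in V^-(G-v)$ then $\gamma(G-\{u,v\})\le\gamma(G)-1$, so extending a corresponding set by $x_1$ again yields $\gamma(G_{u,v,1})\le\gamma(G)$, a contradiction. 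For the backward direction I argue by contradiction from a $\gamma$-set $M$ of $G_{u,v,1}$ of size at most $\gamma(G)$: if $x_1\notin M$ then $M$ is a $\gamma$-set of $G$ meeting $\{u,v\}$, contradicting $\gamma$-badness; if $x_1\in M$ and $u\in M$ (the case $v\in M$ being symmetric), deleting $x_1$ leaves a dominating set of $G-v$ of size $<\gamma(G)$, contradicting $\gamma(G-v)=\gamma(G)$; and if $x_1\in M$ but $u,v\notin M$, then $M-\{x_1\}$ dominates $G-\{u,v\}$ with fewer than $\gamma(G)$ vertices, contradicting $\gamma(G-\{u,v\})\ge\gamma(G)$. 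The final assertion is immediate: $G_{u,v,1}-x_1=G$ gives $\gamma(G_{u,v,1}-x_1)=\gamma(G)=\gamma(G_{u,v,1})-1$, so $x_1\in V^-(G_{u,v,1})$.

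The main obstacle in both parts is the mixed case in which a minimum dominating set of $G_{u,v,1}$ uses the new vertex $x_1$ together with one of $u,v$. The recurring device for handling it is to delete $x_1$ and track which original vertex thereby loses its unique dominator: this confines the damage to a single vertex of $\{u,v\}$ and reduces the situation to the one-vertex deletion facts $\gamma(G-w)\ge\gamma(G)-1$ and, under the $\gamma$-bad hypothesis, $\gamma(G-w)=\gamma(G)$. Verifying that these deletions really leave only that one vertex undominated is the only point that requires care.
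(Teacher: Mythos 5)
Your proof is correct and follows essentially the same route as the paper: both get the outer bounds from the constructions $D\cup\{x_1\}$ and $U\cup\{x_1\}$, and both iff-characterizations rest on the same case analysis of a $\gamma$-set $M$ of $G_{u,v,1}$ according to which of $x_1,u,v$ it contains, reducing everything to the domination numbers of $G-u$, $G-v$ and $G-\{u,v\}$. The only cosmetic differences are that you import the lower bound from Observation \ref{chain2} instead of reproving it, and you replace the paper's private-neighborhood arguments by single-vertex-deletion inequalities; in fact your explicit treatment of the mixed case ($x_1,u\in M$) supplies the justification that the paper leaves implicit in the ($\Leftarrow$) direction of (ii).
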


\begin{proof}
Let $M$ be any $\gamma$-set of  $G_{u,v,1}$. 
Then at least one and not more than two of $x_1,u$ and $v$ 
must be in $M$. Hence $M_1 = (M-\{x_1\}) \cup \{u,v\}$ is 
a dominating set of $G$ and $|M_1| \leq |M|+1$. 
This implies $\gamma(G) \leq \gamma(G_{u,v,1}) +1$. 

(i) $\Rightarrow$ 
Assume the equality $\gamma(G) -1 = \gamma(G_{u,v,1})$ holds. 
Then $|M_1| = |M| + 1$  and $M_1$ is a $\gamma$-set of $G$. 
Hence $x_1 \in M$ and $pn[x_1, M] = \{x_1,u,v\}$. 
Since $M_1 - \{u,v\}$ is a dominating set of $G-\{u,v\}$, we have 
$\gamma(G)-2 \leq \gamma(G-\{u,v\}) \leq |M_1-\{u,v\}| = \gamma(G)-2$. 

(i) $\Leftarrow$
Suppose now $\gamma(G-\{u,v\}) = \gamma(G)-2$. 
Then for any $\gamma$-set $U$ of $G-\{u,v\}$, the set $U \cup \{x_1\}$ is a dominating set of 
$G_{u,v,1}$. This leads to  $\gamma(G_{u,v,1}) \leq |U \cup \{x_1\}| = \gamma(G)-1 \leq \gamma(G_{u,v,1})$. 
\medskip

Now we will prove the right side inequality. 
Let $D$ be any $\gamma$-set of $G$. 
If at least one of $u$ and $v$ is in $D$, then $D$ is a dominating set $G_{u,v,1}$ 
and $\gamma(G_{u,v,1}) \leq \gamma(G)$.  So, let 
neither $u$ nor $v$ belong to some $\gamma$-set of $G$. 
Then $D \cup \{x_1\}$ is a dominating set of $G_{u,v,1}$ and 
$\gamma(G_{u,v,1}) \leq    \gamma(G) + 1$. 

(ii) $\Rightarrow$
Assume  that $\gamma(G_{u,v,1}) = \gamma(G) + 1$. 
Then $u$ and $v$ are $\gamma$-bad vertices of $G$ and for any $\gamma$-set 
$D$ of $G$, $D \cup \{x_1\}$ is a $\gamma$-set of $G_{u,v,1}$. 
Hence  $x_1 \in  V^-(G_{u,v,1})$. 
Suppose $u \in V^-(G-v)$ and let $U$ be a $\gamma$-set of $G-\{u,v\}$. 
Then $U_1 = U \cup \{x_1\}$ is a dominating set of $G_{u,v,1}$ and 
 $\gamma(G) + 1 = \gamma(G_{u,v,1}) \leq |U_1| = 1 + \gamma ((G-v)-u) =\gamma(G-v) = \gamma (G)$, 
a contradiction.  Thus $u \not \in V^-(G-v)$ and by symmetry, $v \not\in V^-(G-u)$. 

(ii) $\Leftarrow$
Let both $u$ and $v$ be $\gamma$-bad vertices of $G$, 
$u \not\in V^-(G-v)$ and $v \not\in V^-(G-u)$. 
Hence $\gamma(G-\{u,v\}) \geq \gamma(G)$. 
Consider any $\gamma$-set $M$ of $G_{u,v,1}$. 
If one of $u$ and $v$ belongs to $M$, then $\gamma(G) + 1 = \gamma(G_{u,v,1})$. 
So, let $x_1$ is in each $\gamma$-set of $G_{u,v,1}$. 
But then $pn[x_1, M] = \{x_1,u,v\}$. 
Hence $\gamma(G_{u,v,1}) -1 = \gamma(G-\{u,v\}) \geq \gamma(G) \geq \gamma(G_{u,v,1}) -1$. 
\end{proof}

\begin{corollary}\label{minus1}
 Let $G$ be a noncomplete  graph. 
 Then    $1 \leq \overline{e}pa (G) \leq \overline{E}pa (G)$. 
 Moreover, (a) $\overline{e}pa (G)  = 1$ if and only if 
there are nonadjacent $\gamma$-bad vertices  $u$ and $v$ of $G$ 	
such that $u \not\in V^-(G-v)$ and $v \not\in V^-(G-u)$,  
and (b) $\overline{E}pa (G)=1$ if and only if $\gamma(G)=1$. 
\end{corollary}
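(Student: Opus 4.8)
The plan is to read everything off Theorem \ref{pa1over} and Observation \ref{chain2}, since both $\overline{e}pa(G)$ and $\overline{E}pa(G)$ are extrema of $pa(u,v)$ taken over the (nonempty, because $G$ is noncomplete) set of nonadjacent pairs. First I would dispose of the chain $1 \le \overline{e}pa(G) \le \overline{E}pa(G)$: the right inequality is immediate because a minimum never exceeds a maximum over the same index set, while for the left inequality I note that Observation \ref{chain2} gives $\gamma(G_{u,v,0}) \le \gamma(G)$ for every nonadjacent pair, so the value $k=0$ can never witness a strict increase of the domination number; hence $pa(u,v) \ge 1$ for each such pair and therefore $\overline{e}pa(G) \ge 1$.

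For part (a) I would observe that $\overline{e}pa(G) = 1$ holds precisely when some nonadjacent pair $u,v$ satisfies $pa(u,v) = 1$, that is, $\gamma(G_{u,v,1}) > \gamma(G)$. Since Theorem \ref{pa1over} caps $\gamma(G_{u,v,1})$ at $\gamma(G)+1$, a strict increase forces $\gamma(G_{u,v,1}) = \gamma(G)+1$, and Theorem \ref{pa1over}(ii) already characterises this as exactly the stated condition: $u$ and $v$ are $\gamma$-bad, $u \notin V^-(G-v)$ and $v \notin V^-(G-u)$. Thus (a) is a direct translation of Theorem \ref{pa1over}(ii) through the definition of $\overline{e}pa$.

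Part (b) is where a small amount of new work is needed. Here $\overline{E}pa(G) = 1$ is equivalent to $pa(u,v) = 1$, i.e. $\gamma(G_{u,v,1}) = \gamma(G)+1$, for \emph{every} nonadjacent pair. For the direction $\gamma(G) = 1 \Rightarrow \overline{E}pa(G) = 1$ I would argue directly: in $G_{u,v,1}$ the new vertex $x_1$ has neighbourhood $\{u,v\}$, so any single dominator of $G_{u,v,1}$ must lie in $\{x_1,u,v\}$; none of these is universal in $G_{u,v,1}$ (for instance $u$ fails to dominate $v$, which is nonadjacent to $u$ and distinct from $x_1$, and $x_1$ cannot reach a third vertex, which exists because a noncomplete graph with $\gamma = 1$ has at least three vertices), whence $\gamma(G_{u,v,1}) \ge 2 > \gamma(G)$. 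For the converse I would prove the contrapositive: if $\gamma(G) \ge 2$, choose a $\gamma$-set $D$ and a vertex $u \in D$; then $u$ is $\gamma$-good, and $u$ cannot be universal (else $\{u\}$ would dominate $G$ and force $\gamma(G) = 1$), so $u$ has a non-neighbour $v$. For this nonadjacent pair the hypothesis of Theorem \ref{pa1over}(ii) fails (it requires \emph{both} endpoints $\gamma$-bad), so $\gamma(G_{u,v,1}) \le \gamma(G)$, giving $pa(u,v) \ge 2$ and hence $\overline{E}pa(G) \ge 2$.

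The whole statement is therefore essentially a repackaging of Theorem \ref{pa1over}, and I expect no serious obstacle. The one place demanding genuine (if elementary) care is the forward direction of (b): Theorem \ref{pa1over}(ii) does not by itself produce a bad pair, so one has to exhibit one explicitly, and the crux is the observation that a vertex belonging to a $\gamma$-set of a graph with $\gamma \ge 2$ is necessarily non-universal and hence always has a non-neighbour to pair it with.
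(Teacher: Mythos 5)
Your proof is correct and follows essentially the same route as the paper: the lower bound from Observation \ref{chain2}, part (a) read directly off Theorem \ref{pa1over}(ii), and part (b) split into the case $\gamma(G)=1$ (argued directly via the degree-two vertex $x_1$) and the case $\gamma(G)\geq 2$ (producing a nonadjacent pair with a $\gamma$-good endpoint, so that Theorem \ref{pa1over}(ii) cannot apply). The only difference is that you spell out details the paper leaves implicit, namely why $\gamma(G)=1$ forces $\gamma(G_{u,v,1})\geq 2$ and why a $\gamma$-good vertex with a non-neighbour exists when $\gamma(G)\geq 2$.
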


\begin{proof}
Observation \ref{chain2} implies $1 \leq \overline{e}pa (G)$. 

(a) Immediately by Theorem \ref{pa1over}.

(b) 	If $\gamma(G) = 1$ then clearly $\overline{E}pa (G)=1$.
       If $\gamma(G) \geq 2$ then $G$ has $2$ nonadjacent vertices 
       at least one of which is $\gamma$-good. 
			By Theorem \ref{pa1over}, $\overline{E}pa (G) \geq 2$. 		
\end{proof}

\begin{theorem}\label{nepn=2}
Let $u$ and $v$ be nonadjacent vertices of a graph $G$. 
Then  $\gamma(G) \leq \gamma (G_{u,v,2}) \leq \gamma(G) +1$. 
Moreover, 
\begin{itemize}
\item[($\mathbb{C}$)]  $\gamma (G_{u,v,2}) = \gamma(G)$ if and only if 
                                              one of the following holds:
\begin{itemize}
\item[(i)] there is a $\gamma$-set of $G$ which contains both $u$ and $v$. 
\item[(ii)] at least one of $u$ and $v$ is in $V^-(G)$.  
\end{itemize}
\item[($\mathbb{D}$)]   $\gamma (G_{u,v,2}) = \gamma(G)+1$
                                                if and only if  $u,v \not\in V^-(G)$ and any $\gamma$-set of $G$ 
																							contains at most one of $u$ and $v$. 
\end{itemize}
\end{theorem}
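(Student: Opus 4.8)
The plan is to prove the two bounds directly and then establish ($\mathbb{C}$), from which ($\mathbb{D}$) is immediate by negation. Recall that $G_{u,v,2}$ is obtained from $G$ by attaching a path $u,x_1,x_2,v$ through the two path-addition vertices $x_1,x_2$, with edges $ux_1$, $x_1x_2$, $x_2v$. The upper bound $\gamma(G_{u,v,2}) \leq \gamma(G)+1$ is routine: for a $\gamma$-set $D$ of $G$, the set $D \cup \{x_1\}$ dominates $G_{u,v,2}$, since $x_1$ covers $u,x_1,x_2$ and $D$ already dominates $G$ (in particular $v$).

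For the lower bound $\gamma(G) \leq \gamma(G_{u,v,2})$ — which, unlike the adjacent case in Observation \ref{chain1}, does \emph{not} follow from the monotonicity in Observation \ref{chain2} — I would take a $\gamma$-set $M$ of $G_{u,v,2}$ and produce a dominating set of $G$ of size at most $|M|$ by a case analysis on $M \cap \{x_1,x_2\}$. The guiding idea is that each path-addition vertex in $M$ can be traded for its unique $G$-neighbor ($x_1 \mapsto u$, $x_2 \mapsto v$) without losing domination of $G$, because inside $G$ the vertex $x_1$ dominates only $u$ and $x_2$ dominates only $v$. When neither $x_1$ nor $x_2$ lies in $M$, the need to dominate $x_1$ and $x_2$ forces $u,v \in M$, and $M$ itself is then a dominating set of $G$.

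Then I would establish ($\mathbb{C}$). For sufficiency: if some $\gamma$-set $D$ of $G$ contains both $u$ and $v$, then $D$ already dominates $x_1$ and $x_2$, so $\gamma(G_{u,v,2}) \leq \gamma(G)$; and if, say, $u \in V^-(G)$, then for a $\gamma$-set $U$ of $G-u$ the set $U \cup \{x_1\}$ dominates $G_{u,v,2}$ with $|U \cup \{x_1\}| = \gamma(G)$. Combined with the lower bound, either case yields equality. For necessity, I would fix a $\gamma$-set $M$ of $G_{u,v,2}$ with $|M| = \gamma(G)$ and rerun the case analysis under this equality constraint. If $x_1,x_2 \notin M$ or $x_1,x_2 \in M$, the constructed dominating set of $G$ is forced to contain both $u$ and $v$ and to have size exactly $\gamma(G)$, giving (i). The delicate case is exactly one path-addition vertex in $M$, say $x_1 \in M$, $x_2 \notin M$: here I would first argue $u \notin M$ (otherwise $M-\{x_1\}$ would dominate $G$ with only $\gamma(G)-1$ vertices, a contradiction), and then observe that $M \cap V(G) = M-\{x_1\}$ dominates $G-u$ using $\gamma(G)-1$ vertices, whence $u \in V^-(G)$, i.e. (ii).

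The main obstacle is precisely this single-path-vertex case: one must check that restricting $M$ to $G-u$ is legitimate, which is exactly why showing $u \notin M$ comes first — it guarantees that no vertex of $G-u$ was relying on $u$ (or on the deleted $x_1$) for its domination. Finally, ($\mathbb{D}$) requires no extra work: since $\gamma(G_{u,v,2}) \in \{\gamma(G),\gamma(G)+1\}$, the equality $\gamma(G_{u,v,2}) = \gamma(G)+1$ is the logical negation of ($\mathbb{C}$), and negating ``(i) or (ii)'' gives exactly ``$u,v \notin V^-(G)$ and no $\gamma$-set of $G$ contains both $u$ and $v$''.
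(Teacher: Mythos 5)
Your proposal is correct and follows essentially the same route as the paper's proof: the upper bound via $D\cup\{x_1\}$ (the paper uses the symmetric $D\cup\{x_2\}$), the lower bound and the necessity in ($\mathbb{C}$) via a case analysis on $M\cap\{x_1,x_2\}$ with each path vertex traded for its unique $G$-neighbor, sufficiency by the same two explicit constructions, and ($\mathbb{D}$) by negation within the established range. If anything, your write-up is slightly more careful than the paper's, which skips the (easy) case $x_1,x_2\notin M$ in the necessity direction.
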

\begin{proof}
For any $\gamma$-set $D$ of $G$, $D\cup \{x_2\}$ is a dominating set of $G_{u,v,2}$. 
 Hence $\gamma(G_{u,v,2}) \leq \gamma(G) +1$. 
Suppose $\gamma(G_{u,v,2}) \leq \gamma(G)-1$  and let $M$ be a $\gamma$-set of $G_{u,v,2}$.
 Then at least one of $x_1$  and $x_2$ is in $M$.
 If $x_1, x_2 \in M$ then $M_1 = (M-\{x_1,x_2\}) \cup \{u,v\}$ 
is a dominating set of $G$ and $|M_1| \leq \gamma(G_{u,v,2})$, a contradiction. 
So let without loss of generality, $x_1 \in M$ and $x_2 \not\in M$. 
If $u  \in M$ or $v \in M$ then again $M_1$ is a dominating set of $G$
 and $|M_1| \leq \gamma(G_{u,v,2})$, a contradiction. 
Thus $x_1 \in M$  and $u,v \not\in M$. But then $(M-\{x_1\}) \cup \{u\}$ is 
a dominating set of $G$,  contradicting $\gamma(G_{u,v,2}) < \gamma(G)$. 
Thus $\gamma(G) \leq \gamma(G_{u,v,2}) \leq  \gamma(G) + 1$.

($\mathbb{C}$) $\Rightarrow$
      Let $\gamma (G_{u,v,2}) = \gamma(G)$. 
			Assume that neither (i) nor (ii) hold. 
			Let $M$ be a $\gamma$-set of $G_{u,v,2}$. 
			If $x_1,x_2 \in M$ then $M_1 = (M-\{x_1,x_2\}) \cup \{u,v\}$ 
			is a dominating set of $G$ of cardinality not more than $\gamma(G)$ 
			and $u,v \in M_1$, a contradiction. 
			Let without loss of generality $x_1 \in M$ and $x_2 \not \in M$. 
			Since $M-\{x_1\}$ is no dominating set of $G$, 
			$u \in pn[x_1,M]$. But then $M_3 = (M-\{x_1\}) \cup \{u\}$ 
			is a $\gamma$-set of $G$ and $u \in V^-(G)$, a contradiction. 
      Thus at least one of (i) and (ii) is valid. 

($\mathbb{C}$) $\Leftarrow$
     If both $u$ and $v$ belong to some $\gamma$-set $D$ of $G$, then 
      $D$ is a dominating set of $G_{u,v,2}$. Hence $\gamma(G_{u,v,2}) = \gamma(G)$. 
     Finally let $u \in V^-(G)$ and $D$ a $\gamma$-set of $G-u$. 
		Then $D \cup \{x_1\}$ is a dominating set of $G_{u,v,2}$ of 
		cardinality $\gamma(G)$. Thus $\gamma(G_{u,v,2}) = \gamma(G)$. 
		
($\mathbb{D}$)		Immediately by ($\mathbb{C}$) and $\gamma(G) \leq \gamma (G_{u,v,2}) \leq \gamma(G) +1$. 
\end{proof}

\begin{corollary} \label{minus11}
Let $G$ be a noncomplete  graph. Then
(a)  $\overline{e}pa(G) \leq  2$ if and only if 
        there  are nonadjacent vertices $u,v \in V(G) - V^-(G)$ 
				such that  any $\gamma$-set of $G$ contains at most one of them, and 
(b)  $\overline{E}pa(G) =  2$   if and only if $\gamma(G) \geq 2$ and 
               each $\gamma$-set of $G$ is a clique. 
	\end{corollary}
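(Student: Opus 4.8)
The plan is to reduce everything to Theorem~\ref{nepn=2} by exploiting the monotonicity $\gamma(G_{u,v,k}) \le \gamma(G_{u,v,k+1})$ recorded in Observation~\ref{chain2}. For a nonadjacent pair $u,v$ we have $\gamma(G) \le \gamma(G_{u,v,2}) \le \gamma(G)+1$, so by monotonicity $pa(u,v) \le 2$ holds if and only if $\gamma(G_{u,v,2}) = \gamma(G)+1$. By Theorem~\ref{nepn=2}($\mathbb{D}$) this is in turn equivalent to the condition
\[
(\star_{u,v})\colon\quad u,v \notin V^-(G) \text{ and no } \gamma\text{-set of } G \text{ contains both } u \text{ and } v.
\]
Thus the whole corollary becomes a question about which nonadjacent pairs satisfy $(\star_{u,v})$.

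For part (a), I would simply observe that $\overline{e}pa(G) = \min\{pa(u,v) : uv \notin E(G)\} \le 2$ if and only if some nonadjacent pair $u,v$ has $pa(u,v) \le 2$, that is, satisfies $(\star_{u,v})$. This is verbatim the asserted condition that there exist nonadjacent $u,v \in V(G)-V^-(G)$ such that no $\gamma$-set of $G$ contains both; so (a) requires no further work beyond the reduction above.

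For part (b), I would first split $\overline{E}pa(G) = 2$ into the two inequalities $\overline{E}pa(G) \ge 2$ and $\overline{E}pa(G) \le 2$. Since $\overline{E}pa(G) \ge 1$ always, Corollary~\ref{minus1}(b) gives $\overline{E}pa(G) \ge 2 \iff \gamma(G) \ge 2$. Next, $\overline{E}pa(G) \le 2$ means that every nonadjacent pair satisfies $(\star_{u,v})$; in particular ``no $\gamma$-set contains both $u$ and $v$'' for \emph{every} nonadjacent pair is precisely the statement that no $\gamma$-set contains two nonadjacent vertices, i.e.\ that each $\gamma$-set is a clique. This immediately yields the direction $\overline{E}pa(G) = 2 \Rightarrow (\gamma(G) \ge 2$ and every $\gamma$-set is a clique$)$.

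The only real work is the converse of (b), and this is where I expect the main obstacle: from the clique hypothesis alone I must recover the $V^-$-part of $(\star_{u,v})$. The plan is to show that if $\gamma(G) \ge 2$ and every $\gamma$-set of $G$ is a clique, then $V^-(G) = \emptyset$. Indeed, suppose $u \in V^-(G)$ and let $D'$ be a $\gamma$-set of $G-u$, so that $|D'| = \gamma(G)-1 \ge 1$. Then $D' \cup \{u\}$ is a $\gamma$-set of $G$, hence a clique, so $u$ is adjacent to every vertex of the nonempty set $D'$; consequently $D'$ dominates $u$ as well and therefore dominates all of $G$, contradicting $|D'| < \gamma(G)$. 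Given $V^-(G) = \emptyset$, every nonadjacent pair $u,v$ automatically has $u,v \notin V^-(G)$, while the clique property gives that no $\gamma$-set contains both; thus $(\star_{u,v})$ holds for every nonadjacent pair, so $\overline{E}pa(G) \le 2$. Combined with $\overline{E}pa(G) \ge 2$ this forces $\overline{E}pa(G) = 2$, completing the converse and the proof.
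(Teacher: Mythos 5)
Your proposal is correct and follows essentially the same route as the paper: both reduce everything to Theorem~\ref{nepn=2} (your condition $(\star_{u,v})$ is exactly its part~($\mathbb{D}$)), both invoke Corollary~\ref{minus1}(b) to get $\gamma(G)\geq 2$, and your key step for the converse of (b) --- that the clique hypothesis forces $V^-(G)=\emptyset$, since a vertex $u\in V^-(G)$ together with a $\gamma$-set of $G-u$ would form a $\gamma$-set of $G$ in which $u$ has no neighbor --- is the same argument the paper gives, merely phrased contrapositively. No gaps; your explicit monotonicity reduction ($pa(u,v)\leq 2 \iff \gamma(G_{u,v,2})=\gamma(G)+1$) just makes precise what the paper leaves implicit.
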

\begin{proof}
(a) Immediately by Theorem \ref{nepn=2}.

(b) $\Rightarrow$ Let $\overline{E}pa(G) =  2$. 
      By Corollary \ref{minus1}, $\gamma(G) \geq 2$. 
			Suppose $G$ has a $\gamma$-set, say $D$, which is not a clique. 
			Then there are nonadjacent $u,v \in D$. By Theorem \ref{nepn=2} ($\mathbb{C}$), 
			$\gamma(G_{u,v,2}) = \gamma(G)$, which contradict $\overline{E}pa(G) =  2$. 
			Thus, each $\gamma$-set of $G$ is a clique. 
			
	(b) $\Leftarrow$  Let $\gamma(G) \geq 2$ and let each $\gamma$-set of $G$ be  
			   a clique. 
						If $G$ has a vertex $z \in V^-(G)$ and $M_z$ is a $\gamma$-set of $G-z$, 
				then $M = M_z \cup \{z\}$ is a $\gamma$-set of $G$ and $z$ is an isolated vertex 
				of the graph induced by $M$, a contradiction. 
				Thus $V^-(G)$ is empty. 
				Now by Theorem \ref{nepn=2} ($\mathbb{D}$), $\overline{E}pa(G) =  2$.				
\end{proof}

\begin{example}\label{KMN}
The join of two graphs $G_1$ and $G_2$ with disjoint vertex sets 
 is the graph, denoted by $G_1 + G_2$, with the vertex set $V(G_1) \cup V(G_2)$ 
 and edge set $E(G_1) \cup E(G_2) \cup  \{uv  \mid u \in V(G_1), v \in V(G_2)\}$. 
Let $\gamma(G_i) \geq 3$, $i=1,2$. Then $\gamma(G_1 + G_2) = 2$ 
and each $\gamma$-set of $G_1 + G_2$ contains exactly one vertex of $G_i$, $i=1,2$. 
Hence $\overline{E}pa(G_1 + G_2) =  2$.
In particular, $\overline{E}pa(K_{m,n}) =  2$  when $m,n \geq 3$.
\end{example}

\begin{theorem}\label{otri}
Let $u$ and $v$ be nonadjacent vertices of a graph $G$.  
Then  $\gamma(G) \leq \gamma (G_{u,v,3}) \leq \gamma(G) +1$. 
Moreover,   $\gamma (G_{u,v,3}) = \gamma(G)$ if and only if 
                              at least one of the following holds:
\begin{itemize}
\item[(i)] $u \in V^-(G)$ and   $v$ is a $\gamma$-good vertex of $G-u$, 
\item[(ii)] $v \in V^-(G)$ and $u$ is a $\gamma$-good vertex of $G-v$.
\end{itemize}			
	\end{theorem}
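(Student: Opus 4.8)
The plan is to establish the two bounds by explicit constructions together with a single projection argument, and then to treat the two directions of the characterization separately. Write $H := G_{u,v,3}$ and label the added path $u = x_0, x_1, x_2, x_3, x_4 = v$, so the path-addition vertices are $x_1, x_2, x_3$. For the upper bound, if $D$ is a $\gamma$-set of $G$ then $D \cup \{x_2\}$ dominates $H$, since the only vertices of $H$ outside $V(G)$ are $x_1, x_2, x_3$ and $x_2$ dominates all three; hence $\gamma(G_{u,v,3}) \leq \gamma(G)+1$.

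For the lower bound I would project a $\gamma$-set $M$ of $H$ back into $G$. Put $M_G = M \cap V(G)$ and $P = M \cap \{x_1,x_2,x_3\}$, and note $P \neq \emptyset$ because $x_2$ has no neighbour outside $\{x_1,x_3\}$. Every $w \in V(G)\setminus\{u,v\}$ satisfies $N_H[w] = N_G[w]$, so $M_G$ dominates $V(G)\setminus\{u,v\}$ in $G$. If $u$ is not dominated by $M_G$ then $x_1 \in M$, and symmetrically $x_3 \in M$ if $v$ is undominated; these are distinct members of $P$. Consequently the set $D$ obtained from $M_G$ by adjoining whichever of $u,v$ is undominated is a dominating set of $G$ with $|D| \leq |M_G| + |P| = |M|$, giving $\gamma(G) \leq \gamma(G_{u,v,3})$. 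The point to record for later is that the equality $\gamma(G_{u,v,3}) = \gamma(G)$ forces every inequality here to be tight, so $|P|$ equals the number of vertices among $u,v$ that are undominated by $M_G$, and at least one of $u,v$ is indeed undominated.

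For the forward (if) direction, assume (i) and pick a $\gamma$-set $D$ of $G-u$ with $v \in D$, so $|D| = \gamma(G)-1$. Then $D \cup \{x_1\}$ dominates $H$: the vertex $x_1$ covers $u, x_1, x_2$, and $v \in D$ covers $x_3$. Thus $\gamma(G_{u,v,3}) \leq \gamma(G)$, and equality follows from the lower bound. Case (ii) is symmetric, using $D \cup \{x_3\}$ where $D$ is a $\gamma$-set of $G-v$ containing $u$ (now $u$ covers $x_1$ while $x_3$ covers $x_2, x_3, v$).

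For the converse, assume $\gamma(G_{u,v,3}) = \gamma(G)$ and take $M, M_G, P$ as above with all projection inequalities tight; by the remark above some vertex, say $u$ (the case of $v$ being symmetric and yielding (ii)), is undominated by $M_G$. If $v$ is dominated by $M_G$ then $|P| = 1$, so $P = \{x_1\}$ and $x_2, x_3 \notin M$; domination of $x_3$ then forces $v \in M_G$, and $M_G$ is a dominating set of $G-u$ of size $\gamma(G)-1$ containing $v$. If instead $v$ is also undominated then $|P| = 2$, $v \notin M_G$, and $M_G \cup \{v\}$ is a dominating set of $G-u$ of size $\gamma(G)-1$ containing $v$. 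In either case we obtain a dominating set of $G-u$ of size $\gamma(G)-1$ containing $v$; since $\gamma(G-u) \geq \gamma(G)-1$ always (as $S \cup \{u\}$ dominates $G$ for any dominating set $S$ of $G-u$), this set is a $\gamma$-set of $G-u$, so $u \in V^-(G)$ and $v$ is a $\gamma$-good vertex of $G-u$, i.e.\ (i) holds. I expect the main obstacle to be exactly this converse: one must extract from a single $\gamma$-set of $H$ not only that $\gamma(G-u)$ drops but that the \emph{specific} vertex $v$ lies in a $\gamma$-set of $G-u$, and the crucial step is the observation that when $|P|=1$ the far path-vertex $x_3$ cannot be dominated from within the path and so forces $v \in M$.
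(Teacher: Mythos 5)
Your proof is correct, and while it shares the paper's overall skeleton (analyze how a $\gamma$-set of $G_{u,v,3}$ meets the path, and convert it into dominating sets of $G$ and $G-u$), the mechanism in the hard direction is genuinely different. The paper derives the lower bound $\gamma(G)\leq\gamma(G_{u,v,3})$ by chaining earlier results (Theorem~\ref{nepn=2} for $G_{u,v,2}$ plus the subdivision monotonicity of Observation~\ref{chain2}), and for the converse it chooses an \emph{extremal} $\gamma$-set $M$ minimizing $Q=M\cap\{x_1,x_2,x_3\}$, asserts ``clearly $|Q|=1$'' (a step that in fact hides several swap arguments of the form $(M-\{x_1\})\cup\{x_2\}$), rules out $Q=\{x_2\}$, and then performs a private-neighborhood analysis and the exchange $(M-\{x_1\})\cup\{u\}$ to produce a $\gamma$-set of $G$ witnessing (i). You instead run a single self-contained projection argument: $M_G=M\cap V(G)$ dominates $V(G)\setminus\{u,v\}$, each undominated vertex among $u,v$ forces a distinct path vertex into $M$, and the resulting inequality $\gamma(G)\leq|M_G|+\#\{\text{undominated}\}\leq|M|$ does double duty --- it proves the lower bound and, when equality holds, its tightness forces $|P|$ to equal the number of undominated endpoints, after which the two cases $|P|=1$ and $|P|=2$ each hand you a dominating set of $G-u$ of size $\gamma(G)-1$ containing $v$. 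This buys you independence from the earlier theorems and avoids both the extremal choice and the implicit swaps; what the paper's route buys is brevity within its own incremental framework, where the $k=1,2$ results are already available and get reused for $k=4,5$ as well. The constructive direction ($D\cup\{x_1\}$ with $D$ a $\gamma$-set of $G-u$ containing $v$) is identical in both proofs.
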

\begin{proof}
If $D$ is a dominating set of $G$, then $D \cup \{x_2\}$ is a dominating set of $G_{u,v,3}$. 
Hence $\gamma(G_{u,v,3}) \leq \gamma(G)+1$. 
We already know that $\gamma(G) \leq \gamma(G_{u,v,2})$ and 
$\gamma(G_{u,v,2}) \leq \gamma(G_{u,v,3})$. But then $\gamma(G) \leq \gamma(G_{u,v,3})$. 

 $\Rightarrow$    Let $\gamma (G_{u,v,3}) = \gamma(G)$ and let $M$ be 
                                   a $\gamma$-set of $G_{u,v,3}$ such that $Q = M \cap \{x_1,x_2,x_3\}$ 
																		has minimum cardinality. Clearly $|Q| = 1$.  
																		If $\{x_2\} = Q$ then $M-\{x_2\}$ is a dominating set of $G$, 
																		contradicting $\gamma (G_{u,v,3}) = \gamma(G)$. 
																		Let without loss of generality $\{x_1\} = Q$.  
																		This implies $v \in M$, $x_3 \in pn[v, M]$ and $pn[x_1,M] = \{u,x_1,x_2\}$. 
																		Then $M_2 = (M -\{x_1\}) \cup \{u\}$ is a $\gamma$-set of $G$, 
																		$pn[u,M_2] = \{u\}$ and $v \in M_2$; hence (i) holds.

$\Leftarrow$  Let without loss of generality (i) is true. 
                              Then there is a $\gamma$-set $D$ of $G$ such that $u,v \in D$ and $D-\{u\}$ 
															is a $\gamma$-set of $G-u$. But then $(D-\{u\}) \cup \{x_1\}$ is a dominating set 
															of $G_{u,v,3}$, which implies $\gamma(G) \geq \gamma(G_{u,v,3})$.
\end{proof}

\begin{corollary} \label{oe3}
Let $G$ be a noncomplete  graph. Then 
\begin{itemize}
\item[(i)] $\overline{e}pa(G) \leq 3$ if and only if  there is a pair of nonadjacent vertices
                   $u$ and $v$ such that neither  (i) nor (ii) of Theorem \ref{otri} is valid.
\item[(ii)] $\overline{e}pa(G) = \overline{E}pa(G) = 3$   if and only if 
                    all vertices of $G$ are $\gamma$-good, $V^-(G)$ is empty and 
                    for every $2$  nonadjacent vertices $u$ and $v$ of $G$ 
		                 there is a $\gamma$-set  of $G$  which contains them both.  									
		\end{itemize} 			
\end{corollary}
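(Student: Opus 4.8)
The plan is to read both parts off Theorems~\ref{nepn=2} and~\ref{otri}, exploiting that the sequence $\gamma(G_{u,v,k})$ is nondecreasing in $k$ (Observation~\ref{chain2}); the only genuine difficulty will be ruling out $\gamma$-critical vertices in the forward direction of (ii). First I would record the basic translation. For nonadjacent $u,v$, monotonicity gives that $pa(u,v)\leq 3$ is equivalent to $\gamma(G_{u,v,3})>\gamma(G)$; since $\gamma(G)\leq \gamma(G_{u,v,3})\leq \gamma(G)+1$ by Theorem~\ref{otri}, this holds iff $\gamma(G_{u,v,3})=\gamma(G)+1$, i.e.\ iff neither (i) nor (ii) of Theorem~\ref{otri} is valid. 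Part (i) is then immediate, because $\overline{e}pa(G)=\min_{uv\notin E(G)}pa(u,v)\leq 3$ precisely when some nonadjacent pair fails both conditions of Theorem~\ref{otri}.

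For (ii) I would first observe that $\overline{e}pa(G)=\overline{E}pa(G)=3$ says exactly that $pa(u,v)=3$ for \emph{every} nonadjacent pair $u,v$. Now $pa(u,v)=3$ encodes two facts: that $pa(u,v)>2$, i.e.\ $\gamma(G_{u,v,2})=\gamma(G)$, which by Theorem~\ref{nepn=2}$(\mathbb{C})$ means some $\gamma$-set contains both $u$ and $v$ or one of them lies in $V^-(G)$; and that $pa(u,v)\leq 3$, which by the translation above means neither alternative of Theorem~\ref{otri} holds. The proof of (ii) consists in reconciling these two facts across all nonadjacent pairs.

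The hard part will be the forward direction, specifically showing $V^-(G)=\emptyset$. Suppose, for contradiction, that $z\in V^-(G)$. For each non-neighbour $v$ of $z$, the condition $pa(z,v)\leq 3$ forces Theorem~\ref{otri}(i) to fail; since $z\in V^-(G)$, this forces $v$ to be $\gamma$-bad in $G-z$. On the other hand $z$ is $\gamma$-critical, so by Lemma~\ref{folk}(iii) every \emph{neighbour} of $z$ is also $\gamma$-bad in $G-z$. Hence every vertex of $G-z$ would be $\gamma$-bad in $G-z$, contradicting the existence of a $\gamma$-set of $G-z$. Thus $V^-(G)=\emptyset$. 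With this established, $pa(u,v)>2$ can no longer be realised through membership in $V^-(G)$, so Theorem~\ref{nepn=2}$(\mathbb{C})$(i) forces a common $\gamma$-set containing $u$ and $v$ for every nonadjacent pair. Finally $\overline{E}pa(G)=3\neq 1$ gives $\gamma(G)\geq 2$ by Corollary~\ref{minus1}(b), so $G$ has no universal vertex; every vertex then has a non-neighbour and lies in a common $\gamma$-set with it, so all vertices are $\gamma$-good.

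The reverse direction should then be routine. Assuming all three stated conditions, fix a nonadjacent pair $u,v$. The common-$\gamma$-set hypothesis supplies Theorem~\ref{nepn=2}$(\mathbb{C})$(i), whence $\gamma(G_{u,v,2})=\gamma(G)$ and $pa(u,v)>2$; and $V^-(G)=\emptyset$ makes both alternatives of Theorem~\ref{otri} fail, whence $\gamma(G_{u,v,3})=\gamma(G)+1$ and $pa(u,v)\leq 3$. Therefore $pa(u,v)=3$ for every nonadjacent pair, giving $\overline{e}pa(G)=\overline{E}pa(G)=3$ and completing the proof.
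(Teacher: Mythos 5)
Your proof is correct, and its overall skeleton matches the paper's: part (i) is the same translation of Theorem \ref{otri} through the monotonicity of Observation \ref{chain2}, and the reverse direction of (ii) is identical. But the two key sub-arguments in the forward direction of (ii) take genuinely different routes. To get $V^-(G)=\emptyset$, the paper supposes $u\in V^-(G)$, takes a $\gamma$-set $D$ of $G-u$ and a vertex $v\in D$, observes that Theorem \ref{otri}(i) then holds for the pair $(u,v)$, hence $pa(u,v)\geq 4$, contradicting $\overline{E}pa(G)=3$; this silently requires that some $v\in D$ is nonadjacent to $u$ (true, since $D\subseteq N_G(u)$ would make $D$ a dominating set of $G$ of size $\gamma(G)-1$, or alternatively because Lemma \ref{folk}(iii) forces $D\cap N_G(u)=\emptyset$), a detail the paper never addresses. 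Your argument runs the contradiction the other way: the hypothesis $pa(z,v)\leq 3$ makes every non-neighbour of $z$ $\gamma$-bad in $G-z$, Lemma \ref{folk}(iii) makes every neighbour $\gamma$-bad in $G-z$, and a graph with at least one vertex cannot have all its vertices $\gamma$-bad; this absorbs the adjacency detail automatically and is fully self-contained. For ``all vertices are $\gamma$-good,'' the paper argues by contradiction against $\overline{e}pa(G)=3$, applying Theorem \ref{nepn=2}($\mathbb{D}$) to a $\gamma$-bad vertex and one of its non-neighbours (again needing the small unstated fact that a $\gamma$-bad vertex cannot be universal), whereas you derive it directly from the already-established common-$\gamma$-set property once $\gamma(G)\geq 2$ is secured by Corollary \ref{minus1}(b). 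Both routes are valid; yours patches the paper's glossed-over details and reorders the deductions so that $\gamma$-goodness comes for free, at the modest cost of invoking two extra known results (Lemma \ref{folk}(iii) and Corollary \ref{minus1}(b)).
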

\begin{proof}
(ii) $\Rightarrow$: Let $\overline{e}pa(G) = \overline{E}pa(G) = 3$. 
       If $u \in V^-(G)$ and $D$ is a $\gamma$-set of $G-u$, 
			then for $u$ and each $v \in D$ is fulfilled (i) of Theorem \ref{otri}. 
			But then $\overline{E}pa(G) \not= 3$, a contradiction. So, 
			$V^-(G)$ is empty. 
			Suppose that $G$ has $\gamma$-bad vertices. 
			Then there is a $\gamma$-bad vertex which is nonadjacent 
			to some other vertex of $G$.  But Theorem \ref{nepn=2}($\mathbb{D}$) 
			implies $\overline{e}pa(G)  < 3$, a contradiction. 
			Thus all vertices of $G$ are $\gamma$-good. 
			Now let $u,v \in V(G)$ be nonadjacent. 
			If there is no $\gamma$-set of $G$ which contains both $u$ and $v$, 
			then by Theorem \ref{nepn=2}($\mathbb{D}$) we have 
			$\gamma(G_{u,v,2}) = \gamma(G) +1$, a contradiction.
			
(ii) $\Leftarrow$: Let $V^-(G)$ be empty and for each pair $u,v$ of 
       nonadjacent vertices of $G$ there is a $\gamma$-set $D_{uv}$ of $G$ with $u,v \in D_{uv}$. 
			By Theorem \ref{otri}, $\gamma(G_{u,v,3}) = \gamma(G) +1$, and 
			by Theorem \ref{nepn=2}, $\gamma(G_{u,v,2}) = \gamma(G)$. 
			Hence $pa(u,v) =3$.  
\end{proof}

\begin{example}\label{opa3}
Denote by $\mathcal{U}$ the class of all graphs $G$  with $\overline{e}pa(G) = \overline{E}pa(G) = 3$. 
Then all the following holds. 
 (a) $C(2k+1; \{\pm 1,\pm 2,...,\pm (k-1)\}) \in \mathcal{U}$ for all $k \geq 1$. 
(b) Let $G$ be a nonconnected graph. Then $G \in \mathcal{U}$ if and only if 
      $G$ has no isolated vertices and each its component is either in $\mathcal{U}$ or is complete.
\end{example}

\begin{theorem}\label{four}
Let $u$ and $v$ be nonadjacent vertices of a graph $G$. 
Then  $\gamma(G) \leq \gamma (G_{u,v,4}) \leq \gamma(G) +2$. 
Moreover, the following assertions are valid.
\begin{itemize}
\item[($\mathbb{E}$)] $\gamma (G_{u,v,4}) = \gamma(G) +2$ if and only if $\gamma (G_{u,v,1})  = \gamma(G) +1$, 
\item[($\mathbb{F}$)] If $\gamma (G_{u,v,1})  = \gamma(G)$ and $\gamma (G_{u,v,i})  = \gamma(G) +1$ 
                                              for some $i \in \{2,3\}$, then $\gamma (G_{u,v,4})  = \gamma(G) +1$. 
\item[($\mathbb{G}$)] Let $\gamma (G_{u,v,3}) = \gamma(G)$.
                                             Then  $\gamma (G_{u,v,4}) \leq \gamma(G) +1$ and 
                                             the equality holds if and only if $\gamma(G-\{u,v\}) \geq \gamma(G)-1$.
\item[($\mathbb{H}$)]  	$\gamma (G_{u,v,4}) = \gamma(G)$	 if and only if 		$\gamma(G-\{u,v\}) = \gamma(G)-2$.																		
 \end{itemize} 
\end{theorem}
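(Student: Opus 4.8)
The plan is to reduce the entire theorem to a single identity,
$\gamma(G_{u,v,4}) = \gamma(G_{u,v,1}) + 1$, valid for every pair of nonadjacent vertices $u,v$, and then to read off $(\mathbb{E})$–$(\mathbb{H})$ from Theorem \ref{pa1over} (which pins down $\gamma(G_{u,v,1})$ relative to $\gamma(G)$) together with the monotonicity chain of Observation \ref{chain2}, Theorem \ref{nepn=2} and Theorem \ref{otri}. Throughout, write $P=\{x_1,x_2,x_3,x_4\}$ for the path-addition vertices of $G_{u,v,4}$ and let $y$ denote the single path-addition vertex of $G_{u,v,1}$. The structural fact I would use repeatedly is that no vertex of $V(G)-\{u,v\}$ is adjacent to any path-addition vertex, so for a dominating set $M$ of either graph the trace $M\cap V(G)$ must already dominate $V(G)-\{u,v\}$ inside $G$.

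For the lower bound $\gamma(G_{u,v,4}) \ge \gamma(G_{u,v,1})+1$ I would take a $\gamma$-set $M$ of $G_{u,v,4}$, set $A=M\cap P$ and $B=M\cap V(G)$, and build a dominating set of $G_{u,v,1}$ of size at most $|M|-1$. Since $x_2,x_3$ can be dominated only from within $P$, one has $|A|\ge 1$. If $|A|\ge 2$, then $B\cup\{y\}$ dominates $G_{u,v,1}$ (it dominates $V(G)-\{u,v\}$ through $B$ and $u,v,y$ through $y$) and has size at most $|M|-1$. If $|A|=1$, a short check forces $A=\{x_2\}$ with $v\in M$, or $A=\{x_3\}$ with $u\in M$; in either case $B$ itself already dominates all of $V(G)$ and also $y$, giving a dominating set of $G_{u,v,1}$ of size $|M|-1$. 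For the reverse inequality $\gamma(G_{u,v,4}) \le \gamma(G_{u,v,1})+1$ I would start from a $\gamma$-set $N$ of $G_{u,v,1}$: if $y\in N$, then $(N-\{y\})\cup\{x_1,x_4\}$ dominates $G_{u,v,4}$ at cost $|N|+1$; if $y\notin N$, then $N\subseteq V(G)$ dominates $G$ and, as $y$ must be dominated, contains $u$ or $v$, so appending $x_3$ (resp.\ $x_2$) dominates the remaining path-addition vertices, again at cost $|N|+1$. Combining the two inequalities yields the identity, and with it the outer bounds, since $\gamma(G)\le\gamma(G_{u,v,4})$ comes from the chain and $\gamma(G_{u,v,1})\le\gamma(G)+1$ gives $\gamma(G_{u,v,4})\le\gamma(G)+2$.

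The four assertions are then immediate. For $(\mathbb{E})$, $\gamma(G_{u,v,4})=\gamma(G)+2$ is equivalent to $\gamma(G_{u,v,1})=\gamma(G)+1$; for $(\mathbb{F})$, the hypothesis $\gamma(G_{u,v,1})=\gamma(G)$ already gives $\gamma(G_{u,v,4})=\gamma(G)+1$ (so the extra hypothesis on $\gamma(G_{u,v,i})$ is not actually needed); for $(\mathbb{G})$, the assumption $\gamma(G_{u,v,3})=\gamma(G)$ forces $\gamma(G_{u,v,1})\in\{\gamma(G)-1,\gamma(G)\}$ via the chain, whence $\gamma(G_{u,v,4})\le\gamma(G)+1$, with equality exactly when $\gamma(G_{u,v,1})=\gamma(G)$, which by Theorem \ref{pa1over}(i) means $\gamma(G-\{u,v\})\ge\gamma(G)-1$; and for $(\mathbb{H})$, $\gamma(G_{u,v,4})=\gamma(G)$ is equivalent to $\gamma(G_{u,v,1})=\gamma(G)-1$, i.e.\ to $\gamma(G-\{u,v\})=\gamma(G)-2$ again by Theorem \ref{pa1over}(i).

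The main obstacle I anticipate is the lower-bound direction of the identity, specifically the sub-case $|A|=1$: one must argue that a single path-addition vertex can dominate both $x_2$ and $x_3$ only if it is $x_2$ or $x_3$, and then that the resulting undominated end ($x_4$ or $x_1$) forces $v$ or $u$ into $M$, so that compressing to $G_{u,v,1}$ genuinely saves one vertex instead of breaking domination at $u$ or $v$. The remaining work is bookkeeping, the only real care being to confirm that each constructed set is dominating before counting it.
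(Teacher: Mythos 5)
Your proposal is correct, and it takes a genuinely different route from the paper. The paper never proves (or states) your key identity $\gamma(G_{u,v,4})=\gamma(G_{u,v,1})+1$; instead it proves the outer bounds directly, establishes a one-sided ``Claim 1'' (if $\gamma(G_{u,v,1})\le\gamma(G)$ then $\gamma(G_{u,v,4})\le\gamma(G)+1$) by constructing dominating sets from Theorem \ref{pa1over}'s characterization, and then handles each of ($\mathbb{E}$)--($\mathbb{H}$) with its own case analysis on a $\gamma$-set of $G_{u,v,4}$ chosen to minimize its intersection with $\{x_1,\dots,x_4\}$. Your identity subsumes all of that: both inequalities are proved by the same trace-and-compress technique (your $|A|\ge 2$ and $|A|=1$ cases are sound --- in particular $A\subseteq\{x_1,x_2,x_3\}\cap\{x_2,x_3,x_4\}=\{x_2,x_3\}$ when $|A|=1$, and the undominated endpoint then forces $u$ or $v$ into $M$, exactly as you say), and ($\mathbb{E}$)--($\mathbb{H}$) become pure bookkeeping against Theorem \ref{pa1over} and Observation \ref{chain2}. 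What your approach buys: a stronger, reusable statement (the domination number of the $k=4$ addition is completely determined by that of the $k=1$ addition), shorter logic with no repeated case analyses, and the observation --- which the paper misses --- that the hypothesis ``$\gamma(G_{u,v,i})=\gamma(G)+1$ for some $i\in\{2,3\}$'' in ($\mathbb{F}$) is redundant. What the paper's route buys is only locality: each assertion is verified by a self-contained construction, at the cost of repetition. One small step you should make explicit in ($\mathbb{G}$): to pass from ``$\gamma(G-\{u,v\})\ne\gamma(G)-2$'' to ``$\gamma(G-\{u,v\})\ge\gamma(G)-1$'' you need the elementary fact $\gamma(G-\{u,v\})\ge\gamma(G)-2$ (add $u$ and $v$ to any dominating set of $G-\{u,v\}$); this is a one-line remark, not a gap in substance.
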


\begin{proof}
Since $\gamma(G) \leq \gamma (G_{u,v,3})$ (by Theorem \ref{otri}) and  
$\gamma (G_{u,v,3}) \leq \gamma (G_{u,v,4})$ (by Observation \ref{chain2}), 
 we have $\gamma(G) \leq \gamma (G_{u,v,4})$. 
Let $S$ be a $\gamma$-set of $G$. Then $S \cup \{x_1,x_4\}$ is a dominating set 
of $G_{u,v,4}$, which leads to $\gamma(G_{u,v,4}) \leq \gamma(G)+2$. 
\medskip

{\bf Claim 1.} \ If $\gamma(G_{u,v,1}) \leq \gamma(G)$ then 
                           $\gamma(G_{u,v,4}) \leq \gamma(G) + 1$. 
\begin{proof}[Proof of Claim 1] 													
Assume that $v$ is a $\gamma$-bad vertex of $G$, $u \in V^-(G-v)$ and $R$ a 
$\gamma$-set of $G-\{u,v\}$. Then $|R| = \gamma((G-v)-u) = \gamma(G-v)-1 
= \gamma(G)-1$ and $R \cup \{x_1,x_4\}$ is a dominating set of $G_{u,v,4}$. 
Hence $\gamma(G_{u,v,4}) \leq |R| +2 = \gamma(G)+1$. 

Assume now that $D$ is a $\gamma$-set of $G$ with $u \in D$. 
Then $D \cup \{x_3\}$ is a dominating set of $G_{u,v,4}$. Hence again $\gamma(G_{u,v,4}) 
\leq \gamma(G) +1$.  Now by Theorem \ref{pa1over} we immediately obtain the required. 
  \end{proof}

($\mathbb{E}$) 
                               Let  $\gamma (G_{u,v,4}) = \gamma(G) + 2$.
                               By Claim 1,  $\gamma (G_{u,v,1}) > \gamma(G) $ and by 
															Theorem \ref{pa1over},  $\gamma (G_{u,v,1}) = \gamma(G) + 1$. 
															
															Let now   $\gamma (G_{u,v,1})  =  \gamma(G) +1$. 
															By Theorem \ref{pa1over} $u$ and $v$ are $\gamma$-bad vertices of $G$, 
															$u \not\in V^-(G-v)$ and $v \not\in V^-(G-u)$.  Let $M$ be a $\gamma$-set of 
															$G_{u,v,4}$ such that $R = M \cap \{x_1,x_2,x_3,x_4\}$ has  minimum 
															cardinality. Clearly $|R| \in \{1,2\}$. 
															Assume first $|R| = 1$ and without loss of generality $\{x_2\} =  M$. 
															Then $M-\{x_2\}$ is a dominating set of $G$ with $v \in M-\{x_2\}$. 
															Since $v$ is a $\gamma$-bad vertex of $G$, $|M-\{x_2\}| > \gamma(G)$ 
															and then $\gamma(G_{u,v,4}) = |M| > \gamma(G)+1$. 
															Let now $|R| = 2$ and without loss of generality $x_1,x_4 \in M$. 
															Since  $|M \cap \{x_1,x_2,x_3,x_4\}|$ is minimum, 
															$u,v \not\in M$ and $M-\{x_1,x_4\}$ is a dominating set of $G-\{u,v\}$. 
															But then  $\gamma (G_{u,v,4}) = 2 + |M-\{x_1,x_4\}| \geq 2 + \gamma((G-u)-v) 
															\geq 2 + \gamma(G-u) = 2+\gamma(G)$. 															
\medskip

($\mathbb{F}$)     Let  $\gamma (G_{u,v,1}) = \gamma(G)$.
                              By Claim 1, $\gamma (G_{u,v,4}) \leq \gamma(G) +1$. 
															If $\gamma (G_{u,v,i}) = \gamma(G) +1$ for some $i \in \{1,2\}$, 
															then since $\gamma (G_{u,v,4}) \geq \gamma (G_{u,v,i})$, 
															we obtain $\gamma (G_{u,v,4}) = \gamma(G) +1$. 
\medskip

 ($\mathbb{G}$)  Let $\gamma (G_{u,v,3}) = \gamma(G) $. 
                                Hence at least one of (i) and (ii) of Theorem \ref{otri} holds, 
                                 and by    ($\mathbb{E}$), $\gamma (G_{u,v,4}) \leq \gamma(G) +1$.
																
											Assume that the equality holds. 
If $\gamma(G-\{u,v\}) = \gamma(G)-2$ then for any 
$\gamma$-set $U$ of $G- \{u,v\}$,  
	$U \cup \{x_1,x_4\}$ is a dominating set of $G_{u,v,4}$. 
	Hence $\gamma(G_{u,v,4}) = \gamma(G)$, a contradiction. 						

Let  now $\gamma(G-\{u,v\}) \geq \gamma(G)-1$ 
and without loss of generality condition (i) of Theorem \ref{otri} is satisfied. 
 	Suppose $\gamma(G_{u,v,4}) = \gamma(G)$.  
	Hence for each $\gamma$-set $M$ of $G_{u,v,4}$ are fulfilled: $x_1,x_4 \in M$, 
	$x_2,x_3,u,v \not\in M$, $pn[x_1,M] = \{x_1,x_2,u\}$ and $pn[x_4,M] = \{x_3,x_4,,v\}$. 
	But then $\gamma(G-\{u,v\}) = \gamma(G)-2$, a contradiction. 	
 Thus $\gamma (G_{u,v,4}) = \gamma(G) +1$. 
\medskip

 ($\mathbb{H}$)
If $\gamma (G_{u,v,4}) = \gamma(G)$ then $\gamma (G_{u,v,3}) = \gamma(G)$
and by  ($\mathbb{G}$), $\gamma(G-\{u,v\}) = \gamma(G)-2$.

Now let $\gamma(G-\{u,v\}) = \gamma(G)-2$.
But then for each $\gamma$-set $D$ of $G-\{u,v\}$, 
the set $D \cup \{x_1,x_4\}$ is a dominating set of $G_{u,v,4}$. 
Thus $\gamma (G_{u,v,4}) = \gamma(G)$.
	\end{proof}

\begin{theorem}\label{five}
Let $u$ and $v$ be nonadjacent vertices of a graph $G$. 
If $\gamma (G_{u,v,k}) = \gamma(G) $ then $k \leq 4$. 
If $k \geq 5$ then  $\gamma (G_{u,v,k}) > \gamma(G) $. 
If $\gamma (G_{u,v,4}) = \gamma(G) $ then $\gamma (G_{u,v,5}) = \gamma(G) + 1$.
\end{theorem}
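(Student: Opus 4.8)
The plan is to reduce all three assertions to the single inequality $\gamma(G_{u,v,5})\geq\gamma(G)+1$, and then to supply a matching upper bound only under the hypothesis of the last assertion. Throughout I write the added path as $u=x_0,x_1,x_2,x_3,x_4,x_5,x_6=v$.

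First I would establish $\gamma(G_{u,v,5})\geq\gamma(G)+1$ by a transformation argument. Let $M$ be a $\gamma$-set of $G_{u,v,5}$ and put $t=|M\cap\{x_1,\dots,x_5\}|$. The key observation is that every $w\in V(G)\setminus\{u,v\}$ has the same closed neighbourhood in $G_{u,v,5}$ as in $G$ (no internal path-vertex is adjacent to such a $w$), so $D_0=M\cap V(G)$ already dominates $V(G)\setminus\{u,v\}$. Thus $D_0$ can fail to dominate $G$ only at $u$ or $v$, and it fails at $u$ exactly when $M\cap N_G[u]=\emptyset$, a situation that forces $x_1\in M$ since $u$ is still dominated in $G_{u,v,5}$. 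Set $\varepsilon_u=1$ in that case and $\varepsilon_u=0$ otherwise, and define $\varepsilon_v$ symmetrically. Let $D$ be obtained from $D_0$ by adjoining $u$ when $\varepsilon_u=1$ and $v$ when $\varepsilon_v=1$; then $D$ dominates $G$ and $|D|\leq (|M|-t)+\varepsilon_u+\varepsilon_v$. It remains to verify $t\geq 1+\varepsilon_u+\varepsilon_v$: since $x_3$ is adjacent only to $x_2,x_4$, some vertex of $\{x_2,x_3,x_4\}$ lies in $M$; if $\varepsilon_u=1$ this is joined by a distinct $x_1\in M$, and if $\varepsilon_v=1$ by a distinct $x_5\in M$. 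Hence $|D|\leq|M|-1$, so $\gamma(G)\leq\gamma(G_{u,v,5})-1$.

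With this inequality the first two assertions are immediate: by Observation \ref{chain2} the sequence $\gamma(G_{u,v,k})$ is nondecreasing, so $\gamma(G_{u,v,k})\geq\gamma(G_{u,v,5})>\gamma(G)$ for every $k\geq 5$, which is the second statement, and the first is its contrapositive. For the third statement the lower bound above gives $\gamma(G_{u,v,5})\geq\gamma(G)+1$, so only the reverse inequality remains. Assuming $\gamma(G_{u,v,4})=\gamma(G)$, Theorem \ref{four} ($\mathbb{H}$) yields $\gamma(G-\{u,v\})=\gamma(G)-2$; taking a $\gamma$-set $U$ of $G-\{u,v\}$, the set $U\cup\{u,x_3,v\}$ dominates $G_{u,v,5}$ (here $U$ covers $V(G)\setminus\{u,v\}$, the vertices $u$ and $v$ cover $x_1$ and $x_5$, and $x_3$ covers $x_2,x_3,x_4$), whence $\gamma(G_{u,v,5})\leq|U|+3=\gamma(G)+1$ and equality follows.

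I expect the delicate point to be the bookkeeping inequality $t\geq 1+\varepsilon_u+\varepsilon_v$. The tempting shortcut of always inserting both $u$ and $v$ into $D$ is lossy: two path-vertices already suffice to dominate $x_1,\dots,x_5$ (for instance $x_2$ and $x_5$), so that route only yields $\gamma(G)\leq\gamma(G_{u,v,5})$ and loses the crucial $+1$. The remedy is precisely to charge for $u$ or $v$ only when its sole $M$-dominator is the adjacent path-vertex, so that each such charge is paid for by a distinct vertex of $\{x_1,\dots,x_5\}$ forced into $M$, while the need to dominate the ``protected'' middle vertex $x_3$ guarantees one further path-vertex beyond those charges.
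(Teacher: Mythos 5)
Your proposal is correct, but it reaches the key lower bound by a genuinely different route than the paper. The paper's proof splits into two cases according to the value of $\gamma(G_{u,v,4})$: if $\gamma(G_{u,v,4})>\gamma(G)$, then Observation \ref{chain2} finishes immediately; if $\gamma(G_{u,v,4})=\gamma(G)$, it invokes Theorem \ref{four}($\mathbb{H}$) to get $\gamma(G-\{u,v\})=\gamma(G)-2$, exhibits the dominating set $D\cup\{x_1,x_3,x_5\}$ for the upper bound (essentially your $U\cup\{u,x_3,v\}$), and then disposes of the lower bound with the single sentence that some vertex of $\{x_2,x_3,x_4\}$ must lie in any $\gamma$-set $M$ of $G_{u,v,5}$, ``hence'' $|M|\geq\gamma(G)+1$ --- leaving the reader to reconstruct how $M$ is converted into a dominating set of $G$ of size $|M|-1$. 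You instead prove the unconditional inequality $\gamma(G_{u,v,5})\geq\gamma(G)+1$ for every nonadjacent pair, with the explicit bookkeeping $t\geq 1+\varepsilon_u+\varepsilon_v$, and then derive all three assertions from it by monotonicity, using Theorem \ref{four}($\mathbb{H}$) only for the upper bound in the third assertion. Your route buys a self-contained and fully verified argument: the charging inequality is exactly the missing detail behind the paper's terse step (each of $x_1$ and $x_5$ is forced into $M$ precisely when you must pay for $u$ or $v$, while dominating $x_3$ supplies one extra path vertex beyond those charges), and it avoids the case distinction on $\gamma(G_{u,v,4})$ altogether. The paper's route buys brevity by leaning on the already-established Theorem \ref{four}, but its crucial inequality is asserted rather than argued, and checking it requires essentially the case analysis your $\varepsilon$-bookkeeping packages cleanly. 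Both proofs share the same upper-bound construction and the same appeal to Observation \ref{chain2}.
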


\begin{proof}
By Theorem \ref{four}, $\gamma(G) \leq \gamma (G_{u,v,4}) \leq \gamma(G) +2$. 
If $\gamma (G_{u,v,4}) > \gamma(G)$ then $\gamma (G_{u,v,k}) > \gamma(G)$ 
for all $k \geq 5$ because of Observation \ref{chain2}. 
So, let $\gamma (G_{u,v,4}) =  \gamma(G)$.  
By Theorem \ref{four}($\mathbb{H}$), $\gamma(G-\{u,v\}) = \gamma(G)-2$. 
But then for each $\gamma$-set $D$ of $G-\{u,v\}$, the set 
$D \cup \{x_1,x_3,x_5\}$ is a dominating set of $G_{u,v,5}$. 
Hence $\gamma (G_{u,v,5}) \leq \gamma(G) + 1$. 
Let now $M$ be a $\gamma$-set of $G_{u,v,5}$. 
Then at least one of $x_2, x_3, x_4$ is in $M$ and 
hence $ \gamma (G_{u,v,5}) = |M| \geq \gamma(G) + 1$. 
Thus $\gamma (G_{u,v,5}) = \gamma(G) + 1$.
Now  using again  Observation \ref{chain2} we conclude
 that $\gamma (G_{u,v,k}) > \gamma(G)$ for all $k \geq 5$.
\end{proof}

\begin{corollary} \label{oe45}
Let $G$ be a noncomplete  graph. Then $\overline{e}pa(G) \leq \overline{E}pa(G) \leq 5$. 
Moreover
\begin{itemize}
\item[(i)] $\overline{E}pa(G) = 5$ if and only if  there are 
									nonadjacent vertices $u$ and $v$ of $G$ with $\gamma(G-\{u,v\}) = \gamma(G)-2$.
\item[(ii)] $\overline{e}pa(G) = 5$  if and only if $G$ is edgeless. 
\item[(iii)] $\overline{e}pa(G) = \overline{E}pa(G)  = 4$  if and only if 
                     for each pair $u,v$ of nonadjacent vertices of $G$, 
											$\gamma(G-\{u,v\}) \geq \gamma(G)-1$
										   and at least one  of the following holds:
\begin{itemize}
\item[(a)] $u \in V^-(G)$ and   $v$ is a $\gamma$-good vertex of $G-u$, 
\item[(b)] $v \in V^-(G)$ and $u$ is a $\gamma$-good vertex of $G-v$.
\end{itemize}		
\end{itemize}
\end{corollary}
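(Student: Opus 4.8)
The plan is to treat this corollary as a bookkeeping consequence of the three preceding theorems, since each value $pa(u,v)$ is already determined by them. First I would record the bound: by Theorem \ref{five}, every nonadjacent pair satisfies $\gamma(G_{u,v,5}) > \gamma(G)$, so $pa(u,v) \leq 5$, and taking the minimum and maximum over nonadjacent pairs gives $\overline{e}pa(G) \leq \overline{E}pa(G) \leq 5$. The one preliminary step I would isolate and reuse is a \emph{dictionary} for nonadjacent $u,v$: namely $pa(u,v) = 5$ iff $\gamma(G-\{u,v\}) = \gamma(G)-2$, and $pa(u,v) = 4$ iff $\gamma(G-\{u,v\}) \geq \gamma(G)-1$ together with condition (i) or (ii) of Theorem \ref{otri}. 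These follow by reading Theorem \ref{four} parts ($\mathbb{G}$), ($\mathbb{H}$) and Theorem \ref{five}, using that $\gamma(G_{u,v,k})$ is nondecreasing in $k$ (Observation \ref{chain2}) and is $\geq \gamma(G)$ for $k \geq 2$ (Theorems \ref{nepn=2}, \ref{otri}); the point to check carefully is that the low-index terms $\gamma(G_{u,v,1}), \gamma(G_{u,v,2}), \gamma(G_{u,v,3})$ never spuriously trigger a smaller value of $pa(u,v)$, which the monotonicity rules out.

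With this dictionary, parts (i) and (iii) are immediate. For (i), since $\overline{E}pa(G) \leq 5$ we have $\overline{E}pa(G) = 5$ iff some nonadjacent pair realises $pa(u,v) = 5$, which is exactly $\gamma(G-\{u,v\}) = \gamma(G)-2$. For (iii), $\overline{e}pa(G) = \overline{E}pa(G) = 4$ says every nonadjacent pair has $pa(u,v) = 4$, which unwinds verbatim to the stated condition via the dictionary.

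Part (ii) is where the real content lies. One direction is routine: if $G$ is edgeless then $\gamma(G) = n$, every pair is nonadjacent, and $G-\{u,v\}$ is again edgeless on $n-2$ vertices with $\gamma = n-2 = \gamma(G)-2$, so every pair has $pa(u,v) = 5$ and $\overline{e}pa(G) = 5$. The converse is the main obstacle, and I would prove its contrapositive: if $G$ is noncomplete and has an edge, then some nonadjacent pair satisfies $\gamma(G-\{u,v\}) \geq \gamma(G)-1$, whence $pa(u,v) \leq 4$ and $\overline{e}pa(G) < 5$. The key device is a \emph{cherry}, i.e.\ a nonadjacent pair $u,v$ with a common neighbour $w$. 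For such a pair, if $W$ were a dominating set of $G-\{u,v\}$ of size $\gamma(G)-2$, then $W \cup \{w\}$ would dominate all of $G$, since $W$ dominates $V(G)\setminus\{u,v\}$ and $w$ dominates both $u$ and $v$; this produces a dominating set of size $\gamma(G)-1$, a contradiction, forcing $\gamma(G-\{u,v\}) \geq \gamma(G)-1$.

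It then remains to produce a cherry or handle the graphs possessing none, which I would do by splitting on whether some nonadjacent pair has a common neighbour. If one does, the cherry argument finishes. If none does, then every two neighbours of any vertex are adjacent, so each neighbourhood is a clique and $G$ is a disjoint union of complete graphs; since $G$ has an edge and is noncomplete it has at least two components, one of them a $K_t$ with $t \geq 2$. Taking $u$ in that $K_t$ and $v$ in another component yields a nonadjacent pair whose deletion leaves the $K_t$-component with unchanged domination number $1$, so $\gamma(G-\{u,v\}) \geq \gamma(G)-1$ and $pa(u,v)\leq 4$ again. This settles the contrapositive and hence (ii). I expect the cherry observation to be the crux of the whole statement; by contrast the dictionary step is short once one is careful that monotonicity of $\gamma(G_{u,v,k})$ makes the small indices harmless.
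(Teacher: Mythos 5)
Your proposal is correct, and for the bound and for parts (i) and (iii) it follows essentially the paper's own route: $\overline{E}pa(G)\leq 5$ from Theorem \ref{five}, and the translation of $pa(u,v)=5$ and $pa(u,v)=4$ into the conditions of Theorem \ref{four}($\mathbb{H}$), ($\mathbb{G}$) and Theorem \ref{otri}, with Observation \ref{chain2} ensuring the low indices cannot spuriously lower $pa(u,v)$. The genuine divergence is in part (ii). The paper also reduces to a nonadjacent pair $u,v$ with a common neighbour $w$, but then argues through the critical/bad-vertex machinery: from $\gamma(G-\{u,v\})=\gamma(G)-2$ it deduces that $w$ is a $\gamma$-bad vertex of $G-u$ (via Lemma \ref{folk}(iii)), hence $v\notin V^-(G-u)$ and $\gamma(G-\{u,v\})\geq\gamma(G)-1$, a contradiction; your cherry argument reaches the same contradiction in one elementary step, since a dominating set $W$ of $G-\{u,v\}$ of size $\gamma(G)-2$ gives the dominating set $W\cup\{w\}$ of $G$ of size $\gamma(G)-1$. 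More importantly, the paper simply asserts that a nonadjacent pair with a common neighbour ``can be chosen,'' which is not automatic: a noncomplete graph with edges need not contain such a pair, and it fails to exactly when $G$ is a disjoint union of complete graphs (the corollary does not assume connectivity). Your final case --- no cherry exists, hence every neighbourhood is a clique, $G$ is a disjoint union of cliques, and a cross-component pair with $u$ in a component $K_t$, $t\geq 2$, gives $\gamma(G-\{u,v\})\geq\gamma(G)-1$ --- supplies precisely the missing justification. So your write-up is not only correct but is more complete than the paper's own proof of (ii), at the modest cost of one extra case distinction.
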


\begin{proof}
By Theorem \ref{five}, $\overline{E}pa(G) \leq 5$. 

(i) $\Rightarrow$  Let $\overline{E}pa(G) = 5$. 
     Then there is a pair $u,v$ of nonadjacent vertices of $G$ such that 
		$\gamma(G_{u,v,4}) = \gamma(G)$. 
		Now by Theorem \ref{four}($\mathbb{H}$), $\gamma(G-\{u,v\}) = \gamma(G)-2$. 
				
(i) $\Leftarrow$  Let 		$\gamma(G- \{u,v\}) = \gamma(G) -2$ and $D$ a $\gamma$-set of $G-\{u,v\}$, 
                                  where $u$ and $v$ are nonadjacent vertices of $G$. 
                                  Hence $D_1 = D \cup \{x_1,x_4\}$    is a dominating set of $G_{u,v,4}$ and 
			                            $|D_1| = \gamma(G)$. This implies $\gamma (G_{u,v,4}) = \gamma(G)$. 
																	The result now follows by Theorem \ref{five}.
			
(ii)                              If $G$ has no edges, then the result is obvious. 
                                   So let $G$ have edges and  $\overline{e}pa(G) = 5$. 
																	Then for any $2$ nonadjacent vertices $u$ and $v$ of $G$
																	is satisfied $\gamma(G- \{u,v\}) = \gamma(G) -2$  (by (i)). 
															    Hence we can choose $u$ and $v$ so that they have a neighbor in common, say $w$.  
																	But then $w$ is a $\gamma$-bad vertex of $G-u$ which implies $v \not\in V^-(G-u)$. 
																	This leads to $\gamma(G- \{u,v\}) \geq \gamma(G) -1$, a contradiction.

(iii) $\Rightarrow$  Let  $\overline{e}pa(G) = \overline{E}pa(G) =4$. 
                                     Then for each two nonadjacent   $u,v \in V(G)$ we have  
																			$\gamma(G) = \gamma(G_{u,v,3}) < \gamma(G_{u,v,4})$. 
																			Now by Theorem \ref{four}($\mathbb{G}$), 
																			$\gamma(G-\{u,v\}) \geq \gamma(G)-1$ and 
																			by Theorem \ref{otri}, at least one of (a) and (b) is valid.

(iii) $\Leftarrow$  Consider any two  nonadjacent vertices $u,v$ of $G$. 
                                  Then   $\gamma(G-\{u,v\}) \geq \gamma(G)-1$
										                 and  at least one  of (a) and (b) is valid. 
																	 Theorem \ref{otri}  now implies $\gamma(G) = \gamma(G_{u,v,3})$, and 
																		by Theorem \ref{four}, $pa(u,v) = 4$. 
		\end{proof}

\begin{example}\label{knn}
Let $G_n$ be the Cartesian product of two copies of  $K_n$, $n \geq 2$. 
We consider $G_n$ as an $n \times n$ array of vertices $\{x_{i,j} \mid 1\leq i \leq j \leq n \}$, 
where the closed neighborhood of $x_{i,j}$ is the union of the sets 
$\{x_{1,j}, x_{2,j},...,x_{n,j}\}$ and $\{x_{i,1}, x_{i,2},...,x_{i,n}\}$. 
Note that $V(G_n) = V^-(G_n)$ and $\gamma(G_n) = n$\cite{hr}.
 It is easy to see that the following sets are $\gamma$-sets of $G_n-x_{1,1}$: 
$D_i = \{x_{2,i},x_{3,i+1},...,x_{n,n+i-2}\}$, $i=2,3,...,n$, 
where $x_{k,j} := x_{k,j-n+1}$ for $j>n$ and $2 \leq k \leq n$. 
Since $D = \cup_{i=2}^nD_i = V(G_n)-N[x_{1,1}]$, 
all $\gamma$-bad vertices of $G_n-x_{1,1}$ are the neighbors of $x_{1,1}$ in $G_n$. 
Since each vertex of $D$ is adjacent to some neighbor of $x_{1,1}$, 
$V^-(G_n-x_{1,1})$ is empty. 
Now by Theorem \ref{otri} we have  $pa(x_{1,1}, y)  \geq 4$, 
and by Theorem \ref{four}($\mathbb{H}$), $pa(x_{1,1}, y)  < 5$. 
Thus $pa(x_{1,1}, y)  = 4$. 
By reason of symmetry, we obtain $\overline{e}pa(G_n) = \overline{E}pa(G_n) =4$.
\end{example}

\section{Observations and open problems}

A  constructive characterization of the trees $T$ with $i(T) \equiv \gamma(T)$, 
and therefore a constructive characterization of the trees $T$ with $Epa(T) =2$ (by Corollary \ref{22}), 
was provided in \cite{hhs}.

\begin{problem}\label{unic}
 Characterize all unicyclic graphs $G$ with  $Epa(G) =2$.
\end{problem}

\begin{problem}\label{ov2}
Find results on $\gamma$-excellent graphs $G$ with $\overline{E}pa(G) =  2$.
\end{problem}

\begin{problem}\label{ee4}
 Characterize all  graphs $G$ with  $\overline{e}pa(G) = \overline{E}pa(G) = 4$.
\end{problem}

\begin{corollary}\label{sum}
Let $G$ be a connected noncomplete graph with edges. Then: 
\begin{itemize}
\item[(i)] $2 \leq epa(G) + \overline{E}pa(G) \leq 8$,
\item[(ii)] $2 \leq epa(G) + \overline{e}pa(G) \leq 7$, 
\item[(iii)] $3 \leq Epa(G) + \overline{E}pa(G) \leq 8$, 
\item[(iv)] $3 \leq Epa(G) + \overline{e}pa(G) \leq 7$. 
\end{itemize}
\end{corollary}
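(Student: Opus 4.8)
The plan is to assemble the corollary entirely from the per-parameter ranges already established, the one nontrivial point being why the upper bounds in (ii) and (iv) drop from $8$ to $7$. First I would record the finiteness and ranges of all four parameters under the stated hypotheses. Since $G$ has an edge, both $epa(G)$ and $Epa(G)$ are finite, and by Corollaries \ref{1}, \ref{22} and \ref{epn3} they satisfy $1 \leq epa(G) \leq Epa(G) \leq 3$ together with $Epa(G) \geq 2$. Since $G$ is noncomplete, $\overline{e}pa(G)$ and $\overline{E}pa(G)$ are finite, and by Corollaries \ref{minus1} and \ref{oe45} they satisfy $1 \leq \overline{e}pa(G) \leq \overline{E}pa(G) \leq 5$.

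With these in hand, the four lower bounds follow immediately by adding the individual minima: $epa(G)+\overline{E}pa(G) \geq 1+1 = 2$ and $epa(G)+\overline{e}pa(G) \geq 1+1 = 2$ give the left-hand sides of (i) and (ii), while the extra unit from $Epa(G) \geq 2$ upgrades the corresponding sums in (iii) and (iv) to at least $3$. Likewise, the upper bounds $8$ in (i) and (iii) are obtained simply by adding the individual maxima $epa(G), Epa(G) \leq 3$ and $\overline{E}pa(G) \leq 5$.

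The key step is the sharper value $4$ needed for (ii) and (iv), which is exactly where one must not naively add $3+5$. Here I would invoke Corollary \ref{oe45}(ii), which asserts that $\overline{e}pa(G) = 5$ holds precisely when $G$ is edgeless; since our $G$ has an edge, the contrapositive forces $\overline{e}pa(G) \leq 4$. Combining this with $epa(G) \leq 3$ and $Epa(G) \leq 3$ yields $epa(G)+\overline{e}pa(G) \leq 7$ and $Epa(G)+\overline{e}pa(G) \leq 7$, completing (ii) and (iv).

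There is no genuine obstacle beyond careful bookkeeping: the statement is a direct consequence of the finiteness of all four quantities (guaranteed by $G$ being noncomplete and having an edge) and the ranges derived in Sections 2 and 3. The only subtlety worth flagging in the write-up is that the hypothesis ``with edges'' is precisely what excludes the extremal value $\overline{e}pa(G)=5$, and it is this exclusion, rather than any new argument, that accounts for the asymmetry between the bound $8$ in (i),(iii) and the bound $7$ in (ii),(iv).
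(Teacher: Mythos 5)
Your proof is correct and follows essentially the same route as the paper's: the lower bounds come from Corollaries \ref{1} and \ref{minus1}, and the upper bounds from Corollaries \ref{epn3} and \ref{oe45}, with the bound of $7$ in (ii) and (iv) resting on Corollary \ref{oe45}(ii), which rules out $\overline{e}pa(G)=5$ for a graph with edges. The paper's proof is just a terse citation of these four corollaries; you have usefully made explicit the one nontrivial point it leaves implicit.
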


\begin{proof}
(i)--(iv): The left-side inequalities  immediately follow by 
              Corollary \ref{1}  and  Corollary \ref{minus1}. 
							The right-side inequalities  hold  because of  
              Corollary \ref{epn3}  and  Corollary \ref{oe45}. 
\end{proof}

Note that all bounds stated in Corollary \ref{sum} are attainable. 
We leave finding examples demonstrating this to the reader.

\begin{problem}\label{seven}
 Characterize all  graphs $G$ for which    $epa(G) + \overline{e}pa(G) = 7$.
\end{problem}
All $2k$-order $(2k-2)$-regular graphs, $k \geq 2$, are such graphs. 

\begin{problem}\label{new}
 Define appropriate the numbers $epa_{\mu}(G)$, $Epa_{\mu}(G)$, 
$\overline{e}pa_{\mu}(G)$ and $\overline{E}pa_{\mu}(G)$, where $\mu$ 
 is at least one of the independent/connected/total/restrained/ double/acyclic/signed/minus/Roman 
 domination number.   Find results on these numbers. 
\end{problem}

{\small

}


\begin{thebibliography}{999}
\bibitem{aaa}  N. Ananchuen, W. Ananchuen,  R.E.L. Aldred, Maximal 3-gamma-vertex-critical graphs,
                          Utilitas  Math.  88(2012), 75--90. 

\bibitem{bv}  A. Bhattacharya, G. R. Vijayakumar, Effect of edge subdivision on 
                         vertex-domination in a graph, 
                        Discuss.  Mathematicae Graph Theory 22 (2002 ) 335--347

\bibitem{bcd} R.C. Brigham, P.Z. Chinn, R.D. Dutton, 
                  A study of vertex domination critical graphs, 
									Technical Report, University of Central Florida, 1984.
									
\bibitem{fhhhl} G.H. Fricke, T.W. Haynes, S.M. Hedetniemi, S.T. Hedetniemi, R.C. Laskar,
                               Excellent trees, Bull. Inst. Comb. Appl. 34 (2002) 27--38.						

\bibitem{hr} B. Hartnell, D. Rall, Bounds on the bondage number of a graph, 
                         Discr. Math. 128 (1994) 173--177

\bibitem{gh} W. Goddard, M.A. Henning, 
                        Independent domination in graphs: A survey and recent results, 
											Discr. Math. 313 (2013) 839--854

\bibitem{hhs1}  T.~W.~Haynes, S.~T.~Hedetniemi, P.~J.~Slater:
                                 Fundamentals of domination in graphs, Marcel Dekker, Inc., New York, NY, 1998.
                              Zbl 0890.05002, MR1605684 			
															
\bibitem{hhs} T.W. Haynes, M.A. Henning, P.J. Slater, 	Strong equality of domination parameters in trees, 
                           Discr. Math. 260 (2003) 77 -- 87
													
							

\bibitem{samtc} V. Samodivkin,	Common extremal graphs for three inequalities involving domination parameters,
														Transactions on Combinatorics, Vol. 6 No. 3 (2017), pp. 1--9
														
\bibitem{wa} H.B. Walikar,  B.D. Acharya,  Domination critical graphs, Nat. Acad. Sci. Lett, 2(1979), 70-72. 														
\end{thebibliography}
\end{document}